\theoremstyle{plain}
\newtheorem{thm}{Theorem}[section]
\newtheorem{lemma}[thm]{Lemma}
\newtheorem{cor}[thm]{Corollary}
\newtheorem{prop}[thm]{Proposition}
\theoremstyle{definition}
\newtheorem{remark}[thm]{Remark}
\newtheorem{Example}[thm]{Example}
\newtheorem{Open questions}[thm]{Open questions}
\newtheorem{Open question}[thm]{Open question}
\newtheorem{Open problems}[thm]{Open problems}
\newtheorem{Open problem}[thm]{Open problem}
\definecolor{dmagenta}{rgb}{.5,0,.5} 
\definecolor{dred}{rgb}{.5,0,0} 
\definecolor{green}{rgb}{0,.5,0} 
\definecolor{blue}{rgb}{0,0,0.5} 
\definecolor{black}{rgb}{0,0,0} 
\definecolor{vdgreen}{rgb}{0,.3,0} 
\definecolor{vdred}{rgb}{.3,0,0} 
\definecolor{red}{rgb}{1,0,0}
\def\bar{\overline}
\newcommand{\R}{\mathbb{R}}
\newcommand{\Z}{\mathbb{Z}}
\newcommand{\N}{\mathbb{N}}
\DeclareMathOperator{\Area}{Area}
\DeclareMathOperator{\degg}{deg}
\DeclareMathOperator{\Sol}{Sol}
\DeclareMathOperator{\GL}{GL}
\newcommand{\set}[1]{\left\{#1\right\}}
\newcommand{\abs}[1]{\left|#1\right|}
\renewcommand{\ni}{\noindent}
\newcommand{\ms}{\medskip}
\newcommand{\TB}[2]{\mbox{\tiny{$\left(\!\!\!\begin{array}{c}{#1} \\ {#2}\end{array}\!\!\!\right)$}}}
\begin{document}

\title{The Dehn function of Baumslag's Metabelian Group}

\author{M.~Kassabov and T.R.~Riley}

\date \today

\begin{abstract}
\ni
Baumslag's group is a finitely presented metabelian group with
a \mbox{$\Z \wr \Z$} subgroup. There is an analogue with
an additional torsion relation in which this subgroup
becomes $C_m \wr \Z$. We prove that Baumslag's group has
an exponential Dehn function. This contrasts with the torsion
analogues  which have  quadratic Dehn functions.
 \ms

\footnotesize{\ni \textbf{2000 Mathematics Subject Classification:  20F65, 20F10}  \\
\ni \emph{Key words and phrases:}
isoperimetric function, Dehn function, metabelian group,
Baumslag's group, lamplighter group}
\end{abstract}

\maketitle

\section{Introduction} \label{intro}

\emph{Baumslag's group}   $\Gamma$
is presented by
\begin{align*}
   \left\langle \, a,s,t \ \left| \   [a,a^t]=1, \ [s,t]=1, \ a^s=aa^t \,
\right. \right\rangle.
\end{align*}
[Our conventions are  $[x,y] = x^{-1}y^{-1} xy$ and
$x^{ny} = y^{-1} x^n y$ for group elements $x$, $y$ and integers $n$.]

Baumslag gave $\Gamma$ in~\cite{Baumslag} as the first example of a
finitely presented group with an abelian normal subgroup of infinite rank ---
namely,  the derived subgroup $[ \Gamma, \Gamma]$.
So $\Gamma$ is metabelian but  not polycyclic.
The subgroup $\langle a, t \rangle$ of $\Gamma$ is
$$
\Z \wr \Z  \ = \  \left( \bigoplus_{i \in \Z} \Z \right)  \rtimes  \Z  \ = \
\left\langle  \, a, t \ \left| \   \left[a,a^{t^k}\right]=1 \ (k \in \Z)  \,
 \right. \right\rangle.
$$
Introducing the relation $a^m=1$, where $m \geq 2$,
gives a   family  $\Gamma_m  =  \langle \, \Gamma  \mid a^m=1  \rangle$,
in which  the subgroup $\langle a, t \rangle$ is  $C_m \wr \Z$, where
   $C_m$ denotes the cyclic group of order $m$.

These groups appear in other guises: both $\Gamma$ and $\Gamma_m$
are groups of affine matrices and have Cayley graphs that are
horocyclic products of trees~\cite{BNW};
for $p$ prime,  $\Gamma_p$ is a  cocompact lattice in
$\Sol_5(\mathbb{F}_p(\!(t)\!))$~\cite{CT}.

\emph{Dehn functions} are invariants of finitely presentable groups
that are both geometric and  combinatorial in character.

The geometric perspective is that Dehn functions are analogous to classical
isoperimetric functions for simply connected Riemannian manifolds ---
these record  the  infimal $N$ such that any loop of length at most
$\ell$ can be spanned by a disc of area $N$.

The combinatorial perspective, our predominant point--of--view in this article,
is that  Dehn functions measure the complexity of a head--on attack on
the word problem. Suppose words $w = w(A)$ and $w'  = w'(A)$
represent the same element of a finitely presented group $\langle A \mid R \rangle$.
The \emph{cost} of converting $w$ to $w'$ is  the minimal $N$ such that
there is a sequence $w = w_0, \ldots, w_N = w'$ in which,  for $0 \leq i < N$,
there are words $u_i \alpha_i v_i$ and $u_{i+1} \beta_{i+1} v_{i+1}$
freely equal to $w_i$ and $w_{i+1}$, respectively,
such that $\alpha_{i} {\beta_{i+1}}^{-1} \in R^{\pm 1}$.
When  $w = w(A)$ represents the identity we define  $\Area(w)$ to be the cost of
converting $w$ to the empty word.  (So, when $w$ and $w'$ represent
the same group element, the \emph{cost} of converting  $w$ to  $w'$ is
$\Area(w^{-1}w')$.)  Equivalently, $\Area(w)$ in the minimal $N$ such that
$w$ freely equals $\prod_{i=1}^N {u_i}^{-1} r_i u_i$ for some words
$u_i=u_i(A)$ and some $r_i \in R^{\pm 1}$.

The  Dehn function $\Area : \N  \to \N$  of  $\langle A \mid R \rangle$ is defined by setting $\Area(n)$ to be  the maximum of  $\Area(w)$ over all words $w = w(A)$ that have length at most $n$ and
represent the identity.

For $f,g:\N \to \N$, we write $f \preceq g$ when there exists $C>0$
such that for all $n$, 
$$
f(n) \leq Cg(Cn+C)+Cn+C.
$$
This gives an equivalence relation capturing qualitative agreement of
growth rates:  $f \simeq g$ if and only if $f \preceq g$ and $g \preceq f$.
Any two finite presentations of the same group yield Dehn functions
that are $\simeq$--equivalent; indeed, up to $\simeq$, Dehn functions provide a quasi--isometry invariant for finitely presentable groups --- 
see, for example, \cite{Bridson6}.

Our main results are the following.
\begin{thm} \label{exp thm}
The Dehn function of  $\Gamma$ satisfies  $\Area(n) \simeq 2^n$.
\end{thm}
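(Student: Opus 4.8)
The plan is to prove the two estimates $\Area(n)\preceq 2^n$ and $\Area(n)\succeq 2^n$ by quite different routes. \emph{For the lower bound:} imposing $t=1$ in the presentation kills the relators $[a,a^t]$ and $[s,t]$ and turns $a^s=aa^t$ into $a^s=a^2$, so there is an epimorphism $\pi\colon\Gamma\twoheadrightarrow BS(1,2)=\langle a,s\mid a^s=a^2\rangle$ that is the identity on $\{a,s\}$ and sends $t$ to $1$. Consider $w_n:=[\,s^{-n}as^n,\,a\,]$, a word of length $4n+O(1)$. Since $a\in[\Gamma,\Gamma]$ (indeed $a^t=[a,s]$) and $[\Gamma,\Gamma]$ is abelian, both $s^{-n}as^n$ and $a$ lie in a common abelian subgroup, so $w_n=1$ in $\Gamma$. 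The map $\pi$ sends a van Kampen diagram over the presentation of $\Gamma$ to one over $\langle a,s\mid a^s=a^2\rangle$ of no larger area, because $[a,a^t]$ and $[s,t]$ map to freely trivial words while $a^{-s}aa^t$ maps to the defining relator of $BS(1,2)$; hence $\Area_\Gamma(w_n)\ge\Area_{BS(1,2)}(\pi(w_n))$, where $\pi(w_n)$ is the word $s^{-n}a^{-1}s^na^{-1}s^{-n}as^na$ representing $[a^{2^n},a]=1$ in $BS(1,2)$. This last word has area $\succeq 2^n$ in $BS(1,2)$: this is classical (any van Kampen diagram for it contains a nested stack of about $n$ $s$-corridors across consecutive ones of which the number of interior $a$-edges doubles, forcing $\succeq 2^n$ two-cells), and it is one of the standard witnesses of the exponential Dehn function of $BS(1,2)$. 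As $|w_n|=O(n)$, this gives $\Area(n)\succeq 2^n$.

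\emph{For the upper bound:} identify $[\Gamma,\Gamma]=B$ with the ring $R=\Z[t^{\pm1},(1+t)^{-1}]$, so that $\Gamma=B\rtimes\Z^2$ with $t,s\in\Z^2$ acting on $R$ by multiplication by $t$ and by $1+t$ respectively — this is exactly the relation $a^s=aa^t$. Given $w$ of length $n$ with $w=1$, rewrite it by processing letters left to right, keeping the running prefix in a normal form $\beta\,s^pt^q$ with $\beta$ a word in conjugates of $a$. Absorbing a letter $t^{\pm1}$ or $s^{\pm1}$ does not lengthen $\beta$ (a $t$ merely re-indexes the conjugates of $a$; an $s$ only commutes past $t^q$ via $[s,t]$), whereas absorbing $a^{\pm1}$ appends $a^{\pm s^{-p}t^{-q}}$ to $\beta$, and it is here that length is created: realising the commutation costs applications of $[a,a^t]$ together with, when $p<0$, an expansion of the polynomial $(1+t)^{|p|}$. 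Since $|p|\le n$ throughout, each such expansion involves a polynomial of $\ell^1$-norm $\le 2^n$, and there are at most $n$ of them, so at cost $2^{O(n)}$ we reach $\beta\,s^pt^q$ freely equal to $w$, with $\beta$ of length $2^{O(n)}$ and equal in $R$ to $(1+t)^{-j}q(t)$ for some $j\le n$. Because $w=1$ we get $p=q=0$ and $\beta=1$ in $B$; multiplying through by $(1+t)^j$ (degree $\le n$, hence $\ell^1$-norm $\le 2^n$) turns this into a trivial Laurent-polynomial word of length $2^{O(n)}$, which — as $B$ is abelian and, on any bounded window of conjugates of $a$, is a finitely generated free abelian group with bounded defining relations — reduces to the empty word at cost quadratic in its length. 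Altogether $\Area(w)\le 2^{O(n)}$, and since $2^{Cn}\simeq 2^n$ this yields $\Area(n)\preceq 2^n$.

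\emph{The main obstacle} is entirely in the upper bound, and specifically in keeping two scales apart: the $(1+t)$-denominators arising in the rewriting must be shown to stay bounded by $n$ — one per letter $s^{-1}$ of $w$ — and \emph{not} to grow in step with the already-exponential length of $\beta$, for otherwise one would be forced to clear a power $(1+t)^{2^{\Omega(n)}}$, a doubly-exponential Laurent polynomial, and would obtain only the weaker bound $\Area(n)\preceq 2^{2^n}$. Making this bookkeeping rigorous, and verifying that the abelian residue $\beta$ genuinely fills in quadratic time, is the heart of the matter; by contrast the lower bound, once the classical $BS(1,2)$ estimate is invoked, comes for free.
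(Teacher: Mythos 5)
Your lower bound is correct and takes a genuinely different route from the paper. Where the paper builds an extension $\bar{\Gamma}$ of $\Gamma$ by a normal $\Z^2$ and computes $[a,a^{t^n}]=p^{(-1)^{n+1}F_n}$ there, so that exponential distortion of the kernel forces exponential area via Proposition~\ref{central extension lower bound}, you go the other way: you map $\Gamma$ \emph{onto} $BS(1,2)$ by killing $t$ and pull back the classical exponential lower bound for $[a^{s^n},a]$ along the standard fact that a homomorphism of presentations carrying each defining relator to a freely trivial word or to a relator does not increase area. Both arguments are sound; yours is shorter but outsources the hard combinatorial content to the known $BS(1,2)$ estimate (itself usually proved by a corridor or distortion argument of the same flavour as the paper's), while the paper's is self-contained. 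Two points you should make explicit: you need $w_n=1$ in $\Gamma$, which requires Baumslag's theorem that $\Gamma$ is metabelian (fine to cite, but not visible from the presentation alone), and you should state the area-comparison lemma for quotient maps precisely rather than gesturing at van Kampen diagrams.

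The upper bound is where there is a genuine gap. Your reduction to a word $\beta$ of length $2^{O(n)}$ in the conjugates $a^{\pm t^i}$ with $|i|\le O(n)$, representing $1$ in the abelian group $B$, matches the paper's word $w_5$ in Proposition~\ref{Reduction}. But your final step --- that $\beta$ ``reduces to the empty word at cost quadratic in its length'' because $B$ on ``any bounded window of conjugates of $a$'' has ``bounded defining relations'' --- does not hold: the window of $t$-exponents occurring in $\beta$ has size linear in $n$, not bounded, and $[a,a^{t^k}]$ is \emph{not} a defining relator of $\Gamma$ for $k\ge 2$. Sorting the letters of $\beta$ requires the relations $[a,a^{t^k}]=1$ for $k$ up to a linear function of $n$, and each such application costs $\Area_\Gamma\bigl([a,a^{t^k}]\bigr)$, a quantity your sketch neither defines nor bounds. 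Proving $\Area\bigl([a,a^{t^k}]\bigr)\le 4^k$ is the paper's Lemma~\ref{Cn upper}, established by an induction that crucially reuses $a^s=aa^t$ (conjugating the identity $aa^{t^k}=a^{t^k}a$ by $s$ and cleaning up with lower-order commutators to reach $k+1$); nothing in your argument supplies this, and it is exactly where the exponential in the upper bound genuinely lives (it is also why the bound drops to polynomial for $\Gamma_m$). The conclusion $2^{O(n)}$ does survive once that lemma is in place --- the reduction then costs the square of the length times $\max_k C(k)$, still $2^{O(n)}$ --- but as written your proof omits precisely the lemma that carries the technical weight of the upper bound.
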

\begin{thm} \label{poly thm}
For all  $m$,  the Dehn functions of  $\Gamma_m$
satisfy  $\Area(n) \preceq n^4$.
\end{thm}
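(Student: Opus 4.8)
The plan is to exploit the one structural feature that separates $\Gamma_m$ from Baumslag's group $\Gamma$. Both fit in a short exact sequence $1 \to M \to \Gamma_m \to \Z^2 \to 1$ in which $\Z^2 = \langle s,t\rangle$ and $M=[\Gamma_m,\Gamma_m]$ is abelian; following Baumslag, $M$ is, as a ring acting on itself, $R_m := (\Z/m\Z)[x^{\pm 1},(1+x)^{-1}]$, with $a$ corresponding to $1$, conjugation by $t$ to multiplication by $x$, and conjugation by $s$ to multiplication by $1+x$. In $\Gamma$ the analogous ring is over $\Z$, and the coefficients of $(1+x)^n$ there grow like $2^n$; that exponential growth is exactly the mechanism behind Theorem~\ref{exp thm}. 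In $R_m$ the coefficients are bounded by $m$, and the plan is to show that once this is so, fillings become polynomial.

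So let $w$ be a word of length $n$ representing $1$ in $\Gamma_m$, and proceed in three stages. \emph{Stage one:} prepend a word $s^N s^{-N}$ with $N\preceq n$ chosen so large that along the new word the running exponent-sum of the $s$-letters is never positive; this is free. Its purpose is that, in the next stage, whenever an $a$-letter is shuttled leftwards across an accumulated block of $s^{\pm1}$- and $t^{\pm1}$-letters, the element of $M$ it is conjugated to is a genuine \emph{polynomial} $x^i(1+x)^j$ with $i,j\preceq n$ and with coefficients below $m$: no negative power of $1+x$ is ever created, and it is precisely those negative powers --- the non-invertibility of $1+x$ over $\Z$ --- that force $\Gamma$ to be exponential. \emph{Stage two (collecting):} move every $a$-letter to the left, merging it into a running word $\nu$ over the $t$-conjugates of $a$ which records an element of $(\Z/m\Z)[x^{\pm1}]$ of support-width $\preceq n$ with coefficients below $m$, so that $|\nu|\preceq n$; the residual $s,t$-letters can then be cleared against the $s^{-N}$-prefix at cost $\preceq n^2$ using the quadratic Dehn function of $\Z^2$, leaving a conjugate of $\nu$. \emph{Stage three:} the word $\nu$ represents $1$ in $M$, and it is killed using the commutator relations $[a^{t^i},a^{t^j}]=1$; each of these is a consequence of the defining relators of $\Gamma_m$ of area polynomial in $|i-j|\preceq n$, and $\preceq|\nu|^2\preceq n^2$ of them suffice. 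Summing the contributions and carrying the exponents through the (elementary, if fussy) estimates yields $\Area(n)\preceq n^4$.

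The bulk of the work, and the main obstacle, is the collecting stage. One must verify that the $s^{-N}$-prefix really does keep the running $s$-exponent non-positive through every rearrangement --- so that only honest polynomials, never negative powers of $1+x$, are produced --- and one must control the cost of repeatedly re-sorting $\nu$ into normal form as conjugates of $a$ are merged in, which is a lamplighter-type bookkeeping where essentially all of the area is spent. Bound up with this is the polynomial area of the auxiliary relations $[a^{t^i},a^{t^j}]=1$ inside $\Gamma_m$: when $m=p$ is prime, the characteristic-$p$ identity $(1+x)^{p^k}=1+x^{p^k}$ gives an efficient ``base-$p$'' method for transporting the single relator $[a,a^t]=1$ across long distances, and the general case is handled by running the argument prime-power by prime-power via the decomposition $R_m\cong\prod_{p^a\,\|\,m}R_{p^a}$ and reassembling the resulting fillings. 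Once the collecting process is shown never to reintroduce unbounded coefficients or an unbounded denominator, the quartic bound follows from the linear estimate $|\nu|\preceq n$ together with the two quadratic estimates in stages two and three.
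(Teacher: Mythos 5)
Your overall architecture is the right one, and it matches the paper's: push $w$ to a normal form $\prod_i a^{\epsilon_i s^{\alpha_i}t^{\beta_i}}$ over $\Z^2$, shift exponents so that only non-negative powers of $s$ (hence only genuine polynomials in $1+x$, never its inverse) appear, convert $s$-conjugates to $t$-conjugates using $a^s=aa^t$ with coefficients kept in $\{0,\dots,m-1\}$ by $a^m=1$, and finish inside $C_m\wr\Z$. You have also correctly isolated the one point where $\Gamma_m$ differs from $\Gamma$. But the load-bearing step --- a polynomial, $m$-uniform area bound for the reordering relations $[a^{t^i},a^{t^j}]=1$ (equivalently, for the ``merging'' moves in your collecting stage) --- is not actually established by what you propose. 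The freshman's-dream identity $(1+x)^{p^k}=1+x^{p^k}$ holds only in characteristic $p$; it already fails in $(\Z/p^a\Z)[x]$ for $a\geq 2$, so ``running the argument prime-power by prime-power'' does not get off the ground for prime powers. Worse, even granting fillings in each quotient $\Gamma_{p_i^{a_i}}$, there is no mechanism for ``reassembling'' them into a filling in $\Gamma_m$: $\Gamma_m$ embeds in the fibered product of these quotients over $\Z^2$, and Dehn functions do not pull back along such embeddings or lift along quotient maps ($\Gamma$ itself surjects onto every $\Gamma_m$ and has exponential Dehn function while the targets are quadratic --- fillings downstairs say nothing upstairs). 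The paper avoids all of this: it never invokes a characteristic-$p$ identity, but instead proves the needed bound uniformly in $m$ by a direct induction on polynomial degree (its Lemmas~\ref{better x + 1 lemma}--\ref{reduce degree} and Proposition~\ref{adding two polys}), the key move being that $\tau_{(x+1)f,(x+1)g}$ is, at cost $O(m^2n^2)$, a conjugate of $\tau_{f,g}$, so each division by $x+1$ costs $O(n^2)$ and the whole relation costs $O(n^3)$.

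Two smaller points. First, your cost accounting in stage three does not visibly close: with $\preceq n^2$ commutator applications you need each to cost $\preceq n^2$ to land at $n^4$, and you have not shown that (a naive induction gives $\preceq n^3$ per commutator; the paper instead performs only $\preceq n$ merges, each of cost $\preceq n^3$, via Proposition~\ref{adding two polys}). Second, prepending the freely trivial word $s^Ns^{-N}$ does not change the running $s$-exponent at any later position of $w$, so it cannot make those exponents non-positive; you need to replace $w$ by its conjugate $w^{s^{-N}}$ (which is legitimate, since conjugation preserves area, and is exactly what the paper does with $\alpha:=\min_i\alpha_i$). This last is a slip rather than a gap, but the first two issues mean the proof as proposed does not yet yield Theorem~\ref{poly thm} for general $m$.
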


Our proofs of the upper bounds are via direct analysis of manipulations of words by relations.
The exponential lower bound in Theorem~\ref{exp thm} stems from a calculation of subgroup distortion in an extension.

A polynomial bound on  the Dehn functions of  $\Gamma_m$  in an earlier version
of this article spurred de~Cornulier and Tessera to prove in~\cite{CT}
that the Dehn function of $\Gamma_p$ is quadratic  for all prime $p$.
They view $\Gamma_p$, for $p$  prime, as a cocompact lattice in
$\Sol_5(\mathbb{F}_p(\!(t)\!))$, which they prove enjoys a
quadratic isoperimetric function by adapting  Gromov's proof from~\cite{Gromov}
of the corresponding result for $\Sol_5(\R)$.
They argue that their methods can be elaborated to cover  $\Gamma_m$  for all $m$.

Alternatively, as we are grateful to an anonymous referee for explaining, this quadratic bound can be obtained by combining results in~\cite{BNW} 
and~\cite{Drutu5}.   The description of a Cayley graph of $\Gamma_m$ in~\cite{BNW} as a horocyclic product is essentially the same as a horosphere corresponding to a barycentric ray in the product of the three trees. (It is contained in the horosphere and it is at finite Hausdorff distance from it.) Theorem~1.1, (1) in~\cite{Drutu5} gives a quadratic Dehn function for the horosphere, and so  for $\Gamma_m$.

We will give a brief account of our proof of Theorem~\ref{poly thm} --- it is elementary and it is interesting to see how the exponential upper bound for the Dehn function of $\Gamma$ improves in $\Gamma_m$.

Another strategy for establishing upper bounds on the Dehn functions
of $\Gamma_m$ for all $m$ has been suggested by N.~Brady~\cite{Brady_personal}.
With respect to a suitable finite presentation, the Cayley 2--complex
$\mathcal{C}$  of $\Gamma_m$ is the horocyclic product of  three
$(m+1)$--valent infinite trees~\cite{BNW} and so sits inside
a  $\textup{CAT}(0)$ space, namely the direct product of three such trees.
A loop in  $\mathcal{C}$ can be spanned by a disc in the ambient
$\textup{CAT}(0)$ space whose area is at most quadratic in the length of the loop.
Brady proposes pushing this disc into $\mathcal{C}$
in the manner of~\cite{ABDDY,Dison} to give a filling disc
with at most a quartic area.

\emph{Background.} Much is already known about the isoperimetry of solvable groups.
The Dehn functions of finitely generated  nilpotent groups %always
admit
upper bounds of $\preceq n^{c+1}$ where $c$ is the nilpotency class~\cite{GHR, Gromov6},
and yet for all $c$ there are class $c$ examples with 
Dehn function $\simeq n^{c+1}$~\cite{BMS, Gersten, Pittet} 
and others with Dehn function $n^2$~\cite{Young4}.
There are also nilpotent examples with Dehn function 
$\simeq n^2 \log n $~\cite{Wenger}.  There are polycyclic groups such as lattices $\Sol_3(\R)$
with exponential Dehn function, and there are non--nilpotent polycyclic groups
such as higher--dimensional analogues of
$\Sol_{2m+1}(\R)$ that have quadratic Dehn functions~\cite{Drutu5, LP} for $m \geq 2$.
Venturing beyond polycyclic groups there are many metabelian groups with
Dehn function $\simeq 2^n$ --- the best known example is $\langle x, y \mid x^y = x^2 \rangle$.   The first non--polycyclic solvable group with Dehn function bounded above by a  polynomial were constructed in~\cite{AO} --- their Dehn functions grow at most cubically.

The groups $\Gamma$ and $\Gamma_m$ have received considerable attention in
other contexts.  The 3-dimensional integral homology group of $\Gamma$  is not finitely generated~\cite{BD}.
 In~\cite{GLSZ} it is shown that $\Gamma_2$ is a counterexample
to a strong version of the Atiyah Conjecture on  $L^2$--Betti numbers.
Random walks on Cayley graphs of  $\Gamma_m$   (Diestel--Leider graphs)
are studied in~\cite{BNW}.  In~\cite{CRwithcorrection}, $\Gamma_2$ was given as
the first known example of a finitely presented group with
\emph{unbounded dead--end depth} --- a property of the shapes of balls in the
Cayley graph.  In~\cite{Cleary} it is shown that
$\langle a, t \rangle \cong \Z \wr \Z$ is exponentially distorted inside $\Gamma$.

 \ms

\emph{The organisation of this article.}  Our exponential  lower bound on
the Dehn function of $\Gamma$ is established in Section~\ref{lower bound} and is proved to be sharp in Section~\ref{exp upper bound}.
In  Section~\ref{poly upper bound} we show how, for $\Gamma_m$,
the upper bound can be improved to quartic.

\ms

\emph{Acknowledgments.} We thank Sean Cleary for discussions which spurred this work, and an anonymous referee for a careful reading. The first author is grateful to  the NSF for partial support via grant DMS~0900932 and to both TIFR and Lufthansa for providing pleasant working environments.

\section{A Dehn function lower bound via extensions} \label{lower bound}

We will use the following general result. %, which (roughly speaking) gives  a lower
%bound for a Dehn function using the distortion of a central extension.
%Recall that  a group $\bar{\Gamma}$ is an extension of a group $\Gamma$
%by a group $H \leq \bar{\Gamma}$ when there is a short exact sequence
%$$1 \to H \to \bar{\Gamma} \to \Gamma  \to 1.$$ %and the extension is central when all elements of $H$ are in the centre of $\Gamma$.

\begin{prop} \label{central extension lower bound}
Suppose $H$ is a normal subgroup of a group $G$ and  $A$ is a finite generating set for $G$.  Suppose  $\langle A \mid R \rangle$ is a finite presentation for  $G/H$.  Then each $r \in R$ can be regarded as representing an element of $H$. Let $B = \set{ r^g \mid r \in R, \, g \in G} \subseteq H$.

If a word $w=w(A)$ represents the identity in $G/H$, then
there exists a word $w'=w'(B)$ that equals $w$ in $G$
and has length equal to the area of $w$ in $\langle A \mid R \rangle$.
\end{prop}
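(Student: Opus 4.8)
The plan is to use the algebraic characterisation of area recalled in the introduction: $\Area(w)$ in $\langle A \mid R\rangle$ is the least $N$ for which $w$ is \emph{freely} equal — i.e.\ equal in the free group $F(A)$ — to a product $\prod_{i=1}^{N} u_i^{-1} r_i u_i$ with each $u_i = u_i(A)$ a word in $A$ and each $r_i \in R^{\pm 1}$. So the first step is simply to fix such an expression of minimal length $N = \Area(w)$.

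The key point is that a free equality persists in every quotient of $F(A)$, and in particular in $G$ (here we use that $A$ generates $G$, so $F(A)$ surjects onto $G$). Hence $w = \prod_{i=1}^{N} u_i^{-1} r_i u_i$ holds \emph{in} $G$. For each $i$, let $g_i \in G$ be the element represented by the word $u_i$; then the $i$\textsuperscript{th} factor $u_i^{-1} r_i u_i$ represents $r_i^{\,g_i} \in G$. If $r_i \in R$ this is by definition a letter of $B$; if $r_i = s^{-1}$ with $s \in R$, then $r_i^{\,g_i} = (s^{g_i})^{-1}$, the inverse of a letter of $B$. Either way, $u_i^{-1} r_i u_i$ represents an element of $B^{\pm 1}$. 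Consequently $w' := \prod_{i=1}^{N} u_i^{-1} r_i u_i$, read as a word in the alphabet $B^{\pm 1}$, has length exactly $N = \Area(w)$ and equals $w$ in $G$, which is the assertion. (Each $r_i$ does represent an element of $H$ because it represents the identity in $G/H$, so $w'$ is indeed a word over a subset of $H$, consistent with $B \subseteq H$.)

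There is essentially no serious obstacle; the proof is a one-line observation dressed up. The only things to be attentive to are bookkeeping details: passing from $r_i \in R^{\pm 1}$ to letters of $B^{\pm 1}$ requires the identity $(s^{g})^{-1} = (s^{-1})^{g}$, and one must keep straight that the $u_i$ are \emph{words} in $A$ whereas the definition $B = \set{ r^g \mid r \in R,\ g \in G}$ conjugates by \emph{elements} of $G$ — so each $u_i^{-1} r_i u_i$ is a single letter of $B^{\pm1}$ even though, as a word in $A$, it may be long. It is worth flagging explicitly why the algebraic form of $\Area$ may be used: that equivalence is stated (without proof) in the introduction, and it is exactly what makes the lifting to $G$ transparent.
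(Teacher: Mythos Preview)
Your proof is correct and follows essentially the same approach as the paper: take a minimal free expression $w = \prod_{i=1}^{N} u_i^{-1} r_i^{\varepsilon_i} u_i$ realising $\Area(w)$, observe that this equality persists in $G$, and note that each factor $(r_i^{u_i})^{\varepsilon_i}$ is a letter of $B^{\pm 1}$. Your version is slightly more careful in distinguishing words in $A$ from elements of $G$ and in handling the signs, but the argument is the same one-line observation.
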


\begin{proof}
As $w$ represents the identity in $G/H$, it freely equals $\prod_{i=1}^{N} {r_i}^{\varepsilon_i u_i}$
for some words $u_i =u_i(A)$, some $r_i \in {R}$ and
some ${\varepsilon_i}= \pm 1$, and $N=\Area(w)$.
But then  $w = \prod_{i=1}^{N} \left( {r_i}^{ u_i} \right)^{\varepsilon_i}$ in $G$ and as each ${r_i}^{u_i}$ represents an element of $B$, the result is proved.
\end{proof}

\begin{cor} \label{dist cor}
If $B$ is finite, then the distortion of $H$ in $G$ is a lower
bound for the Dehn function of $G$.
\end{cor}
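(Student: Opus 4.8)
The plan is to read the corollary straight off Proposition~\ref{central extension lower bound}. Since $B$ is assumed finite, $H=\langle B\rangle$ is finitely generated, so the word metric $\abs{\cdot}_B$ on $H$ is well defined, and with it the distortion function $\delta(n)=\max\set{\abs{h}_B : h\in H,\ \abs{h}_A\le n}$. My goal is the inequality $\delta\preceq\Area$, i.e.\ that $\delta(n)\le\Area(n)$ for all $n$, where $\Area$ is the Dehn function of the presentation $\langle A\mid R\rangle$ of $G/H$ furnished in the hypotheses; this is precisely the assertion that the distortion is a \emph{lower} bound for the Dehn function.

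First I would fix $n$ and pick $h\in H$ achieving the distortion at scale $n$: thus $\abs{h}_A\le n$ while $\abs{h}_B=\delta(n)$. Let $w=w(A)$ be a geodesic word representing $h$ in $G$, so $\abs{w}=\abs{h}_A\le n$. Because $h\in H$, the word $w$ represents the identity in $G/H$, which is exactly the hypothesis required to invoke the Proposition.

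Applying the Proposition to $w$ yields a word $w'=w'(B)$ that equals $w$ in $G$ --- hence represents $h$ --- and whose length equals the area of $w$ in $\langle A\mid R\rangle$. The crucial point is that $w'$ is simultaneously a $B$-expression for $h$, so by definition of the word metric $\abs{h}_B\le\abs{w'}$. Chaining the estimates gives
\[
\delta(n)=\abs{h}_B\le\abs{w'}=\Area(w)\le\Area(n),
\]
the final step using $\abs{w}\le n$ together with the fact that the Dehn function is non-decreasing. As $n$ was arbitrary, $\delta\preceq\Area$, which is the claim.

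I do not anticipate a serious obstacle: once the definitions are unwound, the corollary is little more than a repackaging of the Proposition. The only steps needing care are (i) noting that finiteness of $B$ is exactly what makes $\abs{\cdot}_B$, and therefore $\delta$, meaningful, and (ii) recognising the dual role played by the word $w'$ produced by the Proposition --- it both witnesses the area bound and serves as a $B$-word for $h$, and it is this double duty that turns an area estimate into a distortion estimate. I would also confirm the routine conventions (monotonicity of $\Area$ in $n$, and the degenerate case $h=1$) but expect no difficulty there.
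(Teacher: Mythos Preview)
Your argument is correct and is precisely the intended one: the paper states the corollary without proof, leaving it as an immediate consequence of Proposition~\ref{central extension lower bound}, and your unwinding of the definitions is exactly how that deduction goes. Your care in noting that $H=\langle B\rangle$ (so that $\abs{\cdot}_B$ is the relevant word metric) and in identifying the Dehn function as that of the presentation $\langle A\mid R\rangle$ of $G/H$ is appropriate; indeed the phrase ``Dehn function of $G$'' in the statement should be read as the Dehn function of $G/H$, as your proof and the paper's subsequent application both make clear.
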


This  was used in~\cite{BMS} in the case where $H$ is a  central subgroup of $G$ (and so $B=R$), as in the following example.

\begin{Example}
\label{Z2}
Here is an unusual proof that  the Dehn function
of $\Z^2$ is at least quadratic.
The 3-dimensional integral Heisenberg group
$$
\mathcal{H}_3  \ =  \
\langle  \, a, b, c \  \mid \ [a,b] = c, \ [a,c]=1, \ [b,c] =1 \, \rangle  \ =
\  \left(\begin{array}{ccc}1 & \Z & \Z \\0 & 1 & \Z \\0 & 0 & 1\end{array}\right)
$$
is  a central extension of
$\Z^2 = \langle  \, a, b, c   \,   \mid  \,  [a,b]=c, \  c=1 \, \rangle$
by $\Z = \langle c \rangle$.
The word $[a^n, b^n]$  represents $1$ in $\Z^2$ and $c^{n^2}$ in $\mathcal{H}_3$.
As $c^{n^2}$ has word length $n^2$ with respect to the generating set $B = \{c\}$,
the area of $[a^n, b^n]$ in
$ \langle  \, a,b, c \,   \mid  \,  [a,b]=c, \, c=1 \rangle$ is at least $n^2$.
\end{Example}

We are now ready to establish the exponential lower bound on the Dehn function of $\Gamma$, claimed in Theorem~\ref{exp thm}. We will apply Proposition~\ref{central extension lower bound} and Corollary~\ref{dist cor}  to
$$
\bar{\Gamma} \ = \ \left\langle \!    \ a, p, q, s, t \
\!\bigg|\!
\begin{array}{rlrlrlrlrlrlrlrl}
[a, a^t] \!\!\!\! & = p, & a a^t  \!\!\!\! & = a^{s}q,   & s^{-1} p s \!\!\!\! &  = p^{-1}, & t^{-1}p t \!\!\!\! &= p^{-1},  & [a,p]  \!\!\!\!  &   =1   \\
\ [s,t] \!\!\!\!  & =1, &  [p,q] \!\!\!\! & =1, &  s^{-1} q s \!\!\!\! &  = q^{-1}, & t^{-1}q t \!\!\!\! &= q^{-1},  & [a,q] \!\!\!\! & = 1
\end{array} \!\!
\right\rangle
$$
and its normal subgroup $H := \langle p, q \rangle$.   Baumslag's group $\Gamma$ is $\bar{\Gamma} / H$.

In this case,  $H$ is not central but  Corollary~\ref{dist cor} applies none--the--less because, if $g \in \bar{\Gamma}$ and $h \in H$ then either
 $h^g = h$ or $h^g = h^{-1}$, and so $B$ is finite.   The next two lemmas identify $H$ and  show it is exponentially distorted in $\bar{\Gamma}$.

% The defining relations $[a,a^t]$,  $[s,t]$ and $a^{-s} aa^t$ for $\Gamma$ represent $p$, $1$ and $q$, respectively, when viewed as elements of $\bar{\Gamma}$.

\begin{lemma} \label{almost central extension}
$H$ is isomorphic to $\Z^2$.
\end{lemma}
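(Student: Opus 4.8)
The plan is to reduce everything to the construction of a single explicit homomorphism out of $\bar{\Gamma}$ that keeps $p$ and $q$ independent.

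The easy half first. The relation $[p,q]=1$ shows that $H=\langle p,q\rangle$ is abelian and generated by two elements, so $(i,j)\mapsto p^iq^j$ is a surjection $\Z^2\twoheadrightarrow H$. It therefore suffices to produce a group $L$ and a homomorphism $\phi\colon\bar{\Gamma}\to L$ for which $\phi(p)$ and $\phi(q)$ generate a copy of $\Z^2$ in $L$: then $\Z^2\twoheadrightarrow H\xrightarrow{\phi}\langle\phi(p),\phi(q)\rangle\cong\Z^2$ is, up to the obvious identification, the identity, so $\Z^2\to H$ is an isomorphism.

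To find $L$ I would be guided by the quotient $\bar{\Gamma}/H=\Gamma$, in which $[\Gamma,\Gamma]=\langle a^g:g\in\Gamma\rangle$ is the abelian group $M=\Z[t^{\pm1},(1+t)^{-1}]$ with $t$ acting as multiplication by $t$ and $s$ as multiplication by $1+t$. From the relations $[a,p]=[a,q]=1$ together with $p^s=p^{-1}$, $p^t=p^{-1}$, $q^s=q^{-1}$, $q^t=q^{-1}$ one reads off that $p$ and $q$ are central in the subgroup of $\bar{\Gamma}$ generated by the conjugates of $a$; that subgroup is thus a class-$2$ nilpotent central extension of (a quotient of) $M$ carrying $[a,a^t]=p$ as a commutator. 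The obstruction to pushing this into a finitely generated target is that $1+t$ must become invertible, and the cheapest way to arrange this is to specialise $t$ to a root of $x^2=x+1$, i.e. to map $M$ onto $\Z[\varphi]\cong\Z^2$ with $t$ and $s$ acting as multiplication by $\varphi-1$ and $\varphi$ — that is, as inverse powers of the Fibonacci matrix $\tau=\left(\begin{smallmatrix}0&1\\1&1\end{smallmatrix}\right)$. Concretely I would take $L=\tilde{V}\rtimes\langle s,t\rangle$, where $\tilde{V}$ is the central extension of $V=\Z^2$ by $\Z^2=\langle P,Q\rangle$ defined by a bilinear cocycle $\beta=\beta_P P+\beta_Q Q$ whose antisymmetric part is the standard symplectic form (this supplies the commutator $[a,a^t]=p$) and whose $Q$-component $\beta_Q$ is a symmetric form chosen precisely so that the two lifts $a\cdot a^t$ and $a^s$ of $\varphi\in V$ differ by $q$ (this supplies $aa^t=a^sq$); the generators $s$ and $t$ act on $\tilde{V}$ by $\tau^{\pm1}$ on $V$ and by inversion on $P$ and $Q$, and in particular commute. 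One checks this action is compatible with $\beta$ (the key point is $\beta\circ\tau=-\beta$). Then $\phi$ sends $a$ to a lift of a basis vector of $V$, sends $p\mapsto P$ and $q\mapsto Q$, and sends $s$ and $t$ to the two semidirect-product generators.

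The remaining work is to verify that $\phi$ respects the ten defining relations of $\bar{\Gamma}$: all but $[a,a^t]=p$ and $aa^t=a^sq$ are immediate, and these two are exactly the short $2\times2$ bilinear-form computations that pin down $\beta_P$ and $\beta_Q$. Since $\langle P,Q\rangle\cong\Z^2$ visibly inside $L$, this completes the proof. I expect the genuine difficulty to lie not in this verification but in discovering $L$ — in particular in realising that the $\langle s,t\rangle$-action is forced to factor through a power of the Fibonacci matrix (the same phenomenon that produces the exponential distortion in the next lemma), and that the central kernel must be two-dimensional, with $p$ accounting for the commutator $[a,a^t]$ and $q$ for the defect in $aa^t=a^sq$. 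A more hands-on but essentially equivalent route is to treat $\bar{\Gamma}/\langle p\rangle$ and $\bar{\Gamma}/\langle q\rangle$ separately: in the first, the subgroup generated by the conjugates of $a$ is the $\Z[s^{\pm1},t^{\pm1}]$-module presented by one generator $x$ subject to $(s+1)(s-t-1)x=0$ and $(t+1)(s-t-1)x=0$, in which $q=-(s-t-1)x$ has infinite order because $\Z[s^{\pm1},t^{\pm1}]/(s+1,t+1)\cong\Z$; combining this with $p$ of infinite order (from the Fibonacci-matrix quotient above, which kills $q$) gives $\langle p\rangle\cap\langle q\rangle=1$ and hence $H=\langle p\rangle\times\langle q\rangle\cong\Z^2$.
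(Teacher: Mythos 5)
Your main construction is correct and turns on the same arithmetic idea as the paper's proof --- specialising the Laurent variable at a root of $x^2+x-1$ so that the conjugation action of $s$ and $t$ on the prospective centre becomes inversion --- but the packaging is genuinely different. The paper works inside $3\times 3$ upper unitriangular matrices over $R=\Z[x^{\pm 1},(x+1)^{-1}]$, keeps the two off-diagonal ``module'' entries in $R$, and specialises only the top-right corner to $\Z[\tau]$; you instead collapse the whole module to $V\cong\Z^2$ and build the target as an abstract central extension of $V$ by $\Z^2=\langle P,Q\rangle$ defined by a bilinear cocycle, with $s,t$ acting through the Fibonacci matrix. Your version buys a finitely generated (indeed polycyclic) target and makes transparent exactly why the inversion relations can be satisfied: the Fibonacci matrix has determinant $-1$ (equivalently, the norm of the golden ratio is $-1$), so the symplectic form changes sign, and the $(-1)$-eigenspace of the induced action on integral symmetric forms is nonzero (it is spanned by $\left(\begin{smallmatrix}2&1\\1&-2\end{smallmatrix}\right)$ in the basis $e_1,\ e_1^{\,t^{-1}}$-image, which also normalises correctly to give $aa^t=a^sq$). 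The paper's matrix model buys the opposite convenience: the cocycle with the required equivariance and normalisation is handed to you by the $(1,3)$-entry of a product of unitriangular matrices, so no existence argument for $\beta_P,\beta_Q$ over $\Z$ is needed. Since you defer that existence to ``short $2\times 2$ computations,'' you should actually record a choice of $\beta_P$ and $\beta_Q$; they do exist, so the argument goes through.

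One caution about your closing ``more hands-on'' alternative: asserting that the subgroup of $\bar{\Gamma}/\langle p\rangle$ generated by the conjugates of $a$ \emph{is} the $\Z[s^{\pm 1},t^{\pm 1}]$-module with the stated presentation is precisely the kind of claim that cannot be read off from the relations alone --- a priori the subgroup could be a proper quotient of that module, and ruling this out requires constructing an explicit extension realising the module, i.e.\ the same work as the main route. As written that paragraph is circular; the cocycle construction is the proof.
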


\begin{proof}
%A word on $a,p,q,s,t$ represents an element of $\bar{\Gamma}_0$ precisely when   the total number of $s^{\pm 1}$ and $t^{\pm 1}$ it contains is even.  It is evident from the defining relations of $\bar{\Gamma}$ that $p$ and $q$ commute with any group element represented by such a word.  So $\langle p, q \rangle$ is  central in $\bar{\Gamma}_0$.
Define  $R$ to be the ring $\Z[x,x^{-1},(x+1)^{-1}]$ and
$$
A = \left( \begin{array}{ccc}
1 &  1 &  0 \\   & 1 &  1  \\   &   & 1
\end{array} \right),
\ \  \ \
P = \left( \begin{array}{ccc}
1 &  0 &  -2x-1 \\   & 1 &  0  \\   &   & 1
\end{array} \right),
\ \ \ \
Q =   \left( \begin{array}{ccc}
1 &  0 &  -x-1 \\   & 1 &  0  \\   &   & 1
\end{array} \right),
$$
$$
S = \left( \begin{array}{ccc}
1 & 0 & 0 \\   &  x+1  & 0  \\   &   & -x^2-x
\end{array} \right),
\ \  \ \
T = \left( \begin{array}{ccc}
1 &  0 &  0 \\   &  x &  0  \\   &     &  -x^2-x
\end{array}    \right).
$$
Consider the analogues of the defining relations of $\bar{\Gamma}$.
In $\GL_3(R)$,
$$
[A, A^T] = P, \ \  \ A A^T  = A^{S}Q, \ \ \  [S,T]  = 1, \  \  \
[P,Q]=1, \ \ \ [A,P]=1,  \ \ \ [A,Q]=1
$$
hold, but
\begin{equation} \label{failed relations}
S^{-1}PS=P^{-1}, \ \ \ S^{-1}QS=Q^{-1}, \ \ \
T^{-1}PT=P^{-1}, \ \ \ T^{-1}QT=Q^{-1}
\end{equation}
fail since, for $f \in R$,
\begin{equation} \label{not central}
S^{-1}
\left( \begin{array}{ccc}
1 &  0 &  f \\  &  1 &  0  \\   &     &  1 \end{array}
\right)
S
\  = \
T^{-1}
\left( \begin{array}{ccc}
1 &  0 &  f \\  &  1 &  0  \\   &     &  1 \end{array}
\right)
T
\  = \
\left( \begin{array}{ccc}
1 &  0 &  -(x^2+x) f  \\  &  1 &  0  \\   &     &  1
\end{array}
\right).
\end{equation}

Let $R^*$ denote the invertible elements of $R$.
The matrices $A,P,Q,S,T$ are in
$$
G \ :=  \
\left( \begin{array}{ccc}
1 & R & R \\    & R^* & R \\   &   & R^*
\end{array}\right)  \ \leq \ \GL_3(R).$$
%$$
%G \ :=  \ \left\{ \ \left.
%\left( \begin{array}{ccc}
%1 & \alpha & \beta \\    & g & \gamma \\   &   & (-x^2-x)^k
%\end{array}\right)  \  \right| \
 % \alpha,\beta,\gamma  \in R, \  g \in R^*,  \  k \in \Z  \
%\right\} \ \leq \ \GL_3(R).$$
Let $\tau = (-1 + \sqrt{5})/2$.  Applying the ring homomorphism $R \to \Z[\tau]$,
defined by $f(x) \mapsto f(\tau)$, to the top--right entry maps $G$
to the group
$$
\hat{G} \ :=  \
\left( \begin{array}{ccc}
1 & R & \Z[\tau] \\    & R^* & R \\   &   & R^*
\end{array}\right).$$
%$$
%\hat{G} \ :=  \ \left\{ \ \left.
%\left( \begin{array}{ccc}
%1 & \alpha & \hat{\beta} \\    & g & \gamma \\   &   & (-x^2-x)^k
%\end{array}\right)  \  \right| \
 % \alpha, \gamma  \in R,  \ \hat{\beta} \in \Z[\tau], \  g \in R^*,  \  k \in \Z  \
%\right\}.$$
[Lifting to $G$ and multiplying there
 leads to the group operation on $\hat{G}$.]

The calculation~\eqref{not central} shows that in $\hat{G}$ the (images of the)
relations~\eqref{failed relations} hold  since $\tau^2 + \tau=1$.
So mapping
$a \mapsto A$, $p \mapsto P$, $q \mapsto Q$, $s \mapsto S$, $t \mapsto T$
induces a homomorphism $\bar{\Gamma} \to \hat{G}$.  (Incidentally, it is possible to show that this is an injection, but that is not a result we need here.)
The images of  $P$ and $Q$ inside $\hat{G}$  are the matrices
that differ from the identity only in that they have $-\sqrt{5}$ and $(-1-\sqrt{5})/2$,
respectively, as their top--right entries, and so they generate
$$ \left( \begin{array}{ccc}
1 & 0 & \Z[\tau] \\    & 1 & 0 \\   &   & 1
\end{array}\right).$$  It follows that $H \cong \Z^2$.
\end{proof}

%To apply Proposition~\ref{} to the extension $1 \to H \Gamma$

%Thus, there is a homomorphism from $\Gamma$ to  $\hat{G}/H$,
% where $H$ is the group generated by $P$ and $Q$.

%Next we show that  $H$ is exponentially distorted in $\bar{\Gamma}$.

Let $F_n$ denote the $n$--th Fibonacci number, defined by $F_0 =0$, $F_1 =1$ and $F_{n+2} = F_{n+1} + F_n$ for   $n \geq 0$.

\begin{lemma} \label{distortion calculation}
 $\left[a, a^{t^n}\right] =   p^{(-1)^{n+1}F_n}$ in $\bar{\Gamma}$ for  all  $n \geq 0$.
\end{lemma}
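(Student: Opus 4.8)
The plan is to prove the identity $\left[a, a^{t^n}\right] = p^{(-1)^{n+1}F_n}$ by induction on $n$, using the defining relations of $\bar\Gamma$ to set up a recursion on the conjugates $a^{t^n}$ that mirrors the Fibonacci recursion. First I would record the base cases: for $n=0$ the left side is $[a,a]=1 = p^0 = p^{(-1)^1 F_0}$ since $F_0 = 0$, and for $n=1$ the defining relation $[a,a^t] = p$ gives exactly $p^{(-1)^2 F_1} = p^1$.

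The key algebraic observation is that the relation $aa^t = a^s q$ rearranges to $a^s = aa^t q^{-1}$, i.e. conjugation by $s$ turns $a$ into the product $a a^t$ modulo the central-ish element $q$; more usefully, conjugating $a^s = a a^t q^{-1}$ repeatedly by $t$ and using that $[a,q]=1$, $[p,q]=1$, and the twisted-centrality relations $t^{-1}pt = p^{-1}$, $t^{-1}qt = q^{-1}$ lets one express $a^{t^{n+1}}$ in terms of lower conjugates. Concretely I would aim to show $a^{t^{n}} \cdot a^{t^{n+1}}$ or the commutator $\left[a, a^{t^{n+1}}\right]$ can be rewritten, via an $s$-conjugation of the $n$-th identity, as a product contributing $\left[a,a^{t^n}\right]$ and $\left[a,a^{t^{n-1}}\right]$, so that the exponent of $p$ satisfies $e_{n+1} = \pm(e_n + e_{n-1})$ with the sign bookkeeping producing the alternating $(-1)^{n+1}$. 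The twisted conjugation rules $s^{-1}ps = p^{-1}$ etc. are exactly what flips the sign at each step, which is the source of the $(-1)^{n+1}$ factor rather than a plain $F_n$.

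The main obstacle I expect is the careful bookkeeping of the $q$'s: the relation that governs the recursion involves $q$, not just $p$, so one must verify that all the $q$-contributions cancel (or collapse into the $p$-power) when one forms the commutator $[a, a^{t^{n+1}}]$. This should work because $q$ commutes with $a$ and with $p$, and conjugation by $s^{\pm 1}$ and $t^{\pm 1}$ only inverts $q$, so in any commutator $[a, \cdot]$ the $q$-parts are killed; but making this rigorous requires writing the conjugates in a normal form (say, as $a^{t^k}$'s times a word in $p,q$) and tracking how $s$-conjugation acts on that normal form. An alternative, cleaner route would be to push the computation into the matrix group $\hat G$ from Lemma~\ref{almost central extension}: there $P$ is the matrix with top-right entry $-\sqrt5$, and since the homomorphism $\bar\Gamma \to \hat G$ is known to send $p$ to $P$, it suffices to compute $[A, A^{T^n}]$ in $\hat G$ and check its top-right entry equals $(-1)^{n+1}F_n \cdot(-\sqrt5)$; because $T = \mathrm{diag}(1,x,-x^2-x)$ acts on the middle entry by multiplication by $x = \tau$ and $\tau$ satisfies the Fibonacci-generating relation $\tau^2 = 1-\tau$, the entries of $A^{T^n}$ and hence of the commutator are governed by Fibonacci numbers with alternating signs, and the identity falls out of an explicit $3\times 3$ matrix computation together with the fact that the map $\bar\Gamma \to \hat G$ is injective on $H = \langle p,q\rangle$ (established in Lemma~\ref{almost central extension}). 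I would present whichever of the two is shorter; the matrix version sidesteps the $q$-bookkeeping entirely, so that is my preferred plan, with the combinatorial induction as a backup.
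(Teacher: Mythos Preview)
Your preferred matrix route is precisely the paper's proof: compute $[A,A^{T^n}]$ in $G$, obtain top-right entry $(-x-1)^n - x^n$, push to $\hat G$ where this becomes $(-1)^n\sqrt5\,F_n$, and read off the exponents of $p$ and $q$ using that the restriction $H\to\hat G$ is injective.

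There is, however, one genuine missing step in your outline. Injectivity of $\bar\Gamma\to\hat G$ restricted to $H$ only lets you conclude $[a,a^{t^n}]=p^{(-1)^{n+1}F_n}$ in $\bar\Gamma$ once you already know that $[a,a^{t^n}]$ lies in $H$; equivalently, that $[a,a^{t^n}]=1$ in $\Gamma=\bar\Gamma/H$. You never address this, and it is not automatic: the matrix computation alone only tells you the two elements have the same image in $\hat G$, and the paper explicitly declines to prove that the full map $\bar\Gamma\to\hat G$ is injective (``it is possible to show that this is an injection, but that is not a result we need here''). The paper closes this gap by forward-referencing Lemma~\ref{Cn upper}, which proves $[a,a^{t^n}]=1$ in $\Gamma$ by an elementary induction on $n$. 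You should insert that step (or prove it yourself --- your combinatorial ``backup'' would in fact establish it along the way, since a recursion in $\bar\Gamma$ showing the commutator is a word in $p,q$ in particular shows it dies in $\Gamma$).
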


\begin{proof}
The commutator  $\left[a, a^{t^n}\right]$ represents the identity in $\Gamma$ --- a fact whose proof we postpone to Lemma~\ref{Cn upper} ---
and so, by the previous lemma, represents an element $p^{\lambda} q^{\mu}$ of $H$ in $\bar{\Gamma}$.  We can find $\lambda$ and $\mu$ using the matrices from our proof of of Lemma~\ref{almost central extension}.  We calculate that in $G$,
$$
\left[ A, A^{T^n} \right] \ = \
\left( \begin{array}{ccc}
1 & 0 &  (-x-1)^n -x^n  \\    & 1 &  0 \\   &   & 1
\end{array}\right),
$$
which has image
$$
\left( \begin{array}{ccc}
1 & 0 &  (-1)^n \sqrt{5} F_n \\    & 1 &  0 \\   &   & 1
\end{array}\right)
$$
in  $\hat{G}$.
Now, the matrix in $\hat{G}$ corresponding to $p^{\lambda} q^{\mu}$ has $\lambda(-\sqrt{5}) + \mu (-1-\sqrt{5})/2$ in the upper right corner, and otherwise agrees with the identity matrix.   This entry equals
$(-1)^n \sqrt{5} F_n$ precisely when $\lambda= (-1)^{n+1}F_n$ and $\mu=0$.
%the linear combination $\lambda(-\sqrt{5}) + \mu (-1-\sqrt{5})/2$
%of the images of $P$ and $Q$ in $\Z[\tau]$  that equals
%$(-1)^n \sqrt{5} F_n$ determines the element $p^{\lambda}q^{\mu}$ of
%$\langle p, q \rangle$  equal to  $\left[a, a^{t^n}\right]$ in $\bar{\Gamma}$,
%namely $p^{(-1)^{n+1}F_n}$.
\end{proof}

We conclude that  the Dehn function of $\Gamma$ grows $\succeq 2^n$ by Corollary~\ref{dist cor}.

%Finally, to apply Proposition~\ref{central extension lower bound}
%we need to find the set $B$. Any $g \in \bar{\Gamma}$ acts on $H$ either
%trivially $h^g = h$ or by inversion $h^g = h^{-1}$, therefore  $B = \{p^{\pm 1}, q^{\pm 1} \}$ which is finite.
%So Lemmas~\ref{almost central extension} and~\ref{distortion calculation}
%combine with Proposition~\ref{central extension lower bound}
%to establish that the Dehn function of $\Gamma$ is $\succeq 2^n$.

\section{An exponential upper bound on the Dehn function of $\Gamma$} 
\label{exp upper bound}

Throughout this section  we calculate using the presentation
$$
\left\langle \, a,s,t \ \left| \   [a,a^t]=1, \ [s,t]=1, \ a^s=aa^t \,
\right. \right\rangle
$$ for
 $\Gamma$  introduced in Section~\ref{intro}.

We begin by focussing on a particular family of words.
For $n \in \Z$,  define
$$
C(n) \  :=  \ \Area \, \left[a, a^{t^n}\right],
$$
the minimum cost  to  convert $\left[a, a^{t^n} \right]$ to the empty word,
or equivalently $a a^{t^n}$ to $a^{t^n} a$, in $\Gamma$.
That these commutators represent the identity is the key point in
Baumslag's proof in~\cite{Baumslag}  that $\Gamma$ is metabelian.

 \begin{lemma}  \label{Cn upper}
 $\left[a, a^{t^n}\right]=1$ in $\Gamma$ and
 $C(n) \ \leq \ 4^n$ for all $n \geq 1$.
\end{lemma}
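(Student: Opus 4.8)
The statement has two parts: that $[a, a^{t^n}] = 1$ in $\Gamma$, and the upper bound $C(n) \le 4^n$. The engine is the relation $a^s = aa^t$, which can be read as $a^t = a^{-1}a^s$, i.e. $a^t$ equals $a^{-1}$ times a conjugate of $a$ by $s$. Iterating, $a^{t^n}$ can be pushed down to a word in the conjugates $a, a^s, a^{s^2}, \dots$ — but the cleaner strategy is to conjugate the whole commutator by a power of $s$. Since $[s,t]=1$, we have $[a,a^{t^n}]^{s} = [a^s, a^{s t^n}] = [a^s, a^{t^n s}]$, and $a^s = a a^t$. The idea is to set up a recursion: $C(n)$ is controlled by $C(n-1)$ together with a bounded number of applications of the relation $[a,a^t]=1$ (and its conjugates, which come for free once we know $[a,a^{t^k}]=1$ for small $k$).

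Concretely, I would argue by induction on $n$. For the base case $n=1$: $[a,a^t]=1$ is a defining relator, so $[a,a^t]=1$ in $\Gamma$ and $C(1) \le 1 \le 4$. For the inductive step, write $a^{t^n} = (a^{t})^{t^{n-1}}$ and use $a^t = a^{-1}a^s$ to replace one of the two letters $a^{\pm t^n}$ in $[a, a^{t^n}]$, at a cost governed by $C(n-1)$ (the conjugate of the relation $[a,a^t]=1$ by $t^{n-1}$, used to move things past each other, costs $C(n-1)$). After the substitution, conjugate by $s^{-1}$ to kill the $s$'s — this is free in terms of area (conjugating a word by a fixed generator does not change its area) — and one should be left with an expression involving $[a, a^{t^{n-1}}]$ and $[a, a^{t^{n}}]$-type subwords whose areas are already bounded, net effect being $C(n) \le 4\, C(n-1) + O(1)$, hence $C(n) \le 4^n$. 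The exact bookkeeping of which conjugated commutators appear and with what multiplicity is the delicate part.

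**Main obstacle.** The hard part is the combinatorial surgery in the inductive step: when I rewrite $a^{t^n}$ using $a^s = aa^t$ and then reshuffle, I produce several commutators of the form $[a, a^{t^j}]$ with $j < n$, and I must (a) verify each really does represent the identity (so I can invoke the inductive hypothesis rather than circularly assuming it), and (b) count them so that the recursion closes at exactly $4^n$ rather than some larger base. The fact that conjugation is area-neutral makes the $s$-bookkeeping harmless, but the $t$-conjugates of $[a,a^t]=1$ are what cost $C(n-1)$, and I need to be careful that I use at most four such blocks — this is presumably where the constant $4$ (rather than $2$ or $3$) in $4^n$ comes from, and getting it right requires writing out the word manipulation carefully rather than hand-waving.

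One cautionary note: the lemma also supplies the "postponed" fact used in Lemma~\ref{distortion calculation}, so the proof here must be self-contained and not rely on anything from Section~\ref{lower bound}; in particular $[a,a^{t^n}]=1$ in $\Gamma$ must be established directly from the three defining relators, which the induction above does.
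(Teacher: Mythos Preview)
Your plan has the right spirit---induction, driven by the relation $a^s = aa^t$---but the specific mechanism you outline does not close. Two issues:

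\emph{First}, you write that ``the conjugate of the relation $[a,a^t]=1$ by $t^{n-1}$ \ldots\ costs $C(n-1)$.'' It does not: a conjugate of a defining relator has area~$1$. What costs $C(n-1)$ is the relation $[a,a^{t^{n-1}}]=1$, which is \emph{not} a conjugate of $[a,a^t]=1$. This confusion propagates through your cost estimates.

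\emph{Second, and more seriously}, your inductive step asks you to substitute $a^t = a^{-1}a^s$ inside $a^{t^n}$, then ``conjugate by $s^{-1}$ to kill the $s$'s.'' Tracing this: after the substitution and one use of $[a,a^{t^{n-1}}]=1$, the commutator $[a,a^{t^n}]$ reduces to $[a,a^{st^{n-1}}]$. Conjugating by $s^{-1}$ turns this into $[a^{s^{-1}}, a^{t^{n-1}}]$ (after commuting $s$ past $t$). But $a^{s^{-1}}$ is \emph{not} expressible as a product of the $a^{t^j}$: in the abelian normal closure of $a$, identified with $\Z[x,x^{-1},(1+x)^{-1}]$ with $t$ acting by $x$ and $s$ by $1+x$, the element $a^{s^{-1}}$ corresponds to $(1+x)^{-1}$, which is not a Laurent polynomial in $x$. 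So the $s$'s are not ``killed''; you have traded $a^{t^n}$ for something strictly worse. If instead you try to eliminate the $s$ from $[a,a^{st^{n-1}}]$ by substituting $a^s = aa^t$ back in, you recover $[a,a^{t^n}]$ and the argument is circular.

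The paper's fix is to run the induction in the \emph{other} direction: start from $aa^{t^n} = a^{t^n}a$ (cost $C(n)$), conjugate both sides by $s$ (not $s^{-1}$), and use $a^s = aa^t$ directly to obtain $aa^t a^{t^{n+1}}a^{t^n} = a^{t^{n+1}}a^{t^n}aa^t$. Reshuffling with two instances of $[a,a^{t^n}]=1$ and one of $[a,a^{t^{n-1}}]=1$ gives $aa^{t^{n+1}} = a^{t^{n+1}}a$, yielding the recursion $C(n+1) \le 3C(n) + C(n-1) + 4n + 6$, which is what forces the base $4$ and gives $C(n) \le 4^n$ once one checks small cases by hand. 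The asymmetry between $s$ and $s^{-1}$ is the whole point: only positive powers of $s$ keep you inside $\langle a^{t^j} : j \in \Z\rangle$.
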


\begin{proof}
We induct on $n$.  The cases $n=1, 2, 3$ can be checked directly.
For the induction step, assume $n \geq 3$.  We have $aa^{t^n}  =a^{t^n} a$
at a cost of at most $C(n)$.
So $(aa^{t^n})^s  = (a^{t^n} a)^s$ and therefore
$aa^t (a^t a)^{t^n} =  (a^t a)^{t^n}  a a^t$ at an additional cost of $6 + 4n$.
Thus we have  $aa^t  a^{t^{n+1}} a^{t^n} =  a^{t^{n+1}} a^{t^n} a a^t$.
Then, using the commutator relations $\left[a,a^{t^{n-1}}\right]=1$ and
$\left[a,a^{t^n}\right]=1$ once and twice, respectively,
at a cost of $C(n-1) + 2C(n)$, gives
$a  a^{t^{n+1}} a^t a^{t^n} =  a^{t^{n+1}}  a a^t a^{t^n}$ and therefore
$aa^{t^{n+1}}  =a^{t^{n+1}} a$.
Thus $C(n+1) \leq 3C(n) + C(n-1) + 4n + 6 \leq 3 \cdot 4^n + 4^{n-1} +4n+6$
which is at most $4^{n+1}$ when $n \geq 3$.
\end{proof}

The  notation and techniques used in our next lemma are similar to
those in Section~4.3 of~\cite{GKKL}, which in turn draws on~\cite{CRW}.
It concerns the impact of the relations $a^s = a a^t$ and $a^s = a^t a$
in $\Gamma$.  For a  polynomial $f(x) = \sum_{i=0}^n c_i x^i$ in $\Z[x]$
and letters $a$ and $r$, define the word
\begin{align*}
[\![a]\!]_r^f  & \ := \ a^{c_0}  r^{-1} a^{c_1} r^{-1}    \ldots  a^{c_n} r^n,
\end{align*}
which freely equals $a^{c_0}   a^{c_1r}   \ldots   a^{c_n r^n}$.

\begin{lemma}  \label{x + 1 lemma}
For all polynomials $f(x) = \sum_{i=0}^n c_i x^i$ satisfying
$\max_i \abs{c_i} \leq c$,
$$[\![a]\!]_s^{f(x)}  \ = \  [\![a]\!]_t^{f(x+1)}$$
in   $\Gamma$  and the cost of the equality is at most $D_c(n):=   c^2 4^{n+1}$.
\end{lemma}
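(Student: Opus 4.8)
The plan is to induct on $n = \deg f$, using the relation $a^s = a a^t$ to "process" the word $[\![a]\!]_s^{f(x)}$ one $s$ at a time, trading each $s$ for a $t$ while the polynomial in the exponent absorbs a factor of $x+1$. Concretely, write $f(x) = c_0 + x g(x)$ where $g(x) = \sum_{i=0}^{n-1} c_{i+1} x^i$ has degree $n-1$ and coefficients bounded by $c$. Then, freely, $[\![a]\!]_s^{f(x)} = a^{c_0} \bigl( [\![a]\!]_s^{g(x)} \bigr)^s$, and I would like to say by induction that $[\![a]\!]_s^{g(x)} = [\![a]\!]_t^{g(x+1)}$, hence $[\![a]\!]_s^{f(x)} = a^{c_0} \bigl( [\![a]\!]_t^{g(x+1)} \bigr)^s$. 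The point of conjugating a $t$-word by $s$ is that $s$ acts on $a^{t^i}$ in a controlled way: since $a^s = a a^t$, one gets $(a^{t^i})^s = (a^s)^{t^i} = (a a^t)^{t^i} = a^{t^i} a^{t^{i+1}}$, which is exactly multiplication of the exponent polynomial by $x+1$ again. Applying this termwise to $[\![a]\!]_t^{g(x+1)}$ (which freely equals a product $\prod a^{b_i t^i}$ with $\sum b_i x^i = g(x+1)$) converts it to a word freely equal to $\prod (a^{t^i} a^{t^{i+1}})^{b_i}$, i.e.\ to $[\![a]\!]_t^{(x+1)g(x+1)}$. Combined with the leading $a^{c_0} = a^{c_0 t^0}$ this gives $[\![a]\!]_t^{c_0 + (x+1)g(x+1)} = [\![a]\!]_t^{f(x+1)}$, as desired.

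The cost bookkeeping is where the constant $D_c(n) = c^2 4^{n+1}$ has to be nailed down, and this is the main thing to get right. There are three sources of cost. First, the inductive hypothesis contributes $D_c(n-1)$, but this appears conjugated by $s$, which multiplies the cost by $1$ (conjugation of a van Kampen diagram by a single generator does not increase area). Second, each application of $(a^{t^i})^s = a^{t^i}a^{t^{i+1}}$ is a bounded number of applications of $a^s = aa^t$ (after shuttling the conjugating $t^i$ past, using $[s,t]=1$ a linear-in-$i$ number of times); summed over the $O(n)$ terms with total coefficient mass $O(cn)$, this is at most something like $O(c n^2)$ elementary moves — polynomial, hence absorbed. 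Third, and crucially, after the free rearrangements the intermediate word $\prod(a^{t^i}a^{t^{i+1}})^{b_i}$ is not literally in the normal form $[\![a]\!]_t^{(x+1)g(x+1)}$: adjacent blocks overlap in a single power $t^{i}$, and to collect the two contributions to a given $a^{t^i}$ we must commute powers of $a$ past powers of $a^{t^j}$. This is exactly where the commutator relations $[a, a^{t^k}] = 1$ enter, and by Lemma~\ref{Cn upper} each such swap costs at most $4^k \le 4^{n+1}$; the number of swaps needed is $O(c n)$ or so (bounded by the total coefficient mass). This yields the dominant term, of the shape (coefficient mass) $\times\, 4^{n+1}$, which is what forces the $c^2$ (one factor of $c$ from each of the two words being multiplied, cf.\ the bound $\max|c_i|\le c$ applied to a product of two such polynomials) and the $4^{n+1}$.

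So the recursion will read roughly $D_c(n) \le D_c(n-1) + (\text{poly in } c, n) + (Kc^2)\,4^{n+1}$ for an absolute constant $K$, and one checks this is dominated by $c^2 4^{n+1}$ for the stated definition — the geometric factor $4^{n+1}$ is large enough that both the $D_c(n-1) = c^2 4^n$ term and the polynomial correction fit inside $c^2 4^{n+1}$ with room to spare (this is the same phenomenon as in the final line of the proof of Lemma~\ref{Cn upper}). The base case $n = 0$ is trivial: $[\![a]\!]_s^{c_0} = a^{c_0} = [\![a]\!]_t^{c_0}$ at zero cost. I expect the genuine obstacle to be purely combinatorial accounting: tracking the conjugating $t^i$'s through the $[s,t]=1$ relations and counting precisely how many $[a,a^{t^k}]$-swaps are needed so that the bound really comes out at $c^2 4^{n+1}$ rather than something slightly larger; the algebraic skeleton (each $s$ becomes a $t$, each pass multiplies the exponent polynomial by $x+1$) is forced and clean.
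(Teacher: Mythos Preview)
Your inductive skeleton matches the paper's proof exactly: write $f(x)=c_0+xg(x)$, freely factor $[\![a]\!]_s^{f(x)}=a^{c_0}\bigl([\![a]\!]_s^{g(x)}\bigr)^s$, apply the inductive hypothesis inside, push the outer $s$ through using $[s,t]=1$ and $a^s=aa^t$, and then rearrange.

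The gap is in your accounting for the rearrangement step, and it matters because this is the dominant term. You do \emph{not} need Lemma~\ref{Cn upper} here, and invoking it is what sends your estimates off course. After applying $a^s=aa^t$ you have $a^{c_0}[\![aa^t]\!]_t^{g(x+1)}$, a product of blocks $\bigl((aa^t)^{m_i}\bigr)^{t^i}$ where $\sum m_i x^i=g(x+1)$. To reach $[\![a]\!]_t^{(x+1)g(x+1)}$ you convert each $(aa^t)^{m_i}$ to $a^{m_i}(a^t)^{m_i}$; this uses only the \emph{defining} relation $[a,a^t]=1$ (conjugated by $t^i$), at cost at most $m_i^2$ per block. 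Once that is done the adjacent blocks already agree on their shared $t$--power and telescope freely --- no commutators $[a,a^{t^k}]$ with $k>1$ ever appear. So the $4^n$ does not come from $C(k)\le 4^k$; it comes from the \emph{size of the coefficients} $m_i$. Since $g$ has degree $n-1$ with coefficients bounded by $c$, the coefficients of $g(x+1)$ have absolute values summing to at most $c(2^n-1)$, whence $\sum m_i^2\le(\sum|m_i|)^2\le c^2 4^n$. That is where the $c^2$ genuinely arises --- from squaring a single large coefficient --- not from ``one factor of $c$ from each of two words''. Your swap count ``$O(cn)$'' is likewise off: the coefficient mass of $g(x+1)$ is exponential in $n$, not linear. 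With this correction the recursion reads
\[
D_c(n-1)+2n+c\,2^n+c^2 4^n \ \le \ c^2 4^{n+1},
\]
which closes for $c\ge 1$ and $n\ge 1$.
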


\begin{proof}  The lemma is trivial in the case $c=0$,
so we can assume that $c \geq 1$.
We induct on  $n$.   When $n=0$ the words are identical.
For the induction step assume $n \geq 1$.
Let $\hat{f}(x) = \sum_{i=1}^n c_i x^{i-1}$, so that
$f(x) = c_0 + x \hat{f}(x)$.
$$
\begin{array}{lllllll}
[\![a]\!]_s^{f(x)} &  \stackrel{\textup{I}}{=} & a^{c_0} \left( [\![a]\!]_s^{\hat{f}(x)} \right)^s
 & \stackrel{\textup{II}}{=} & a^{c_0} \left( [\![a]\!]_t^{\hat{f}(x+1)} \right)^s
 & \stackrel{\textup{III}}{=} & a^{c_0}  [\![a^s]\!]_t^{\hat{f}(x+1)}   \\
 & \stackrel{\textup{IV}}{=} & a^{c_0} [\![aa^t]\!]_t^{\hat{f}(x+1)}
  & \stackrel{\textup{V}}{=} & a^{c_0} [\![a]\!]_t^{(x+1)\hat{f}(x+1)}
  & \stackrel{\textup{VI}}{=} &  [\![a]\!]_t^{f(x+1)},
  \end{array}
$$
where (I) is a free equality, (II) costs at most $D_c(n-1)$  by the induction hypothesis,
(III) costs at most $2n$ applications of $[s,t] =1$,
(IV) uses $a^s = aa^t$ and costs the sum of the absolute value of the coefficients of $\hat{f}(x+1)$ and so at most $c 2^n$, (V) will be explained momentarily,
and (VI) is a free equality.  For each coefficient $m$ of  $\hat{f}(x+1)$,
equality (V) uses $[a,a^t]=1$ to transform a word $(aa^t)^{m}$ to $a^{m} a^{m t}$
at a cost of no more than $m^2$.  So a crude upper bound for the total cost of (V)
is $c^2$ times the square of the sum of the coefficients of
$\sum_{i=1}^n (x+1)^{i-1}$ --- that is,
$c^2 \left( \sum_{i=1}^n 2^{i-1} \right)^2 \leq  c^2 4^n$.
Thus $[\![a]\!]_s^f$ can be transformed to $[\![a]\!]_t^{f(x+1)}$
at a total cost of at most
\begin{equation} \label{summing stuff}
D_c(n-1)  + 2n + c 2^n+  c^2 4^n   \ \leq \    D_c(n).
\qedhere
\end{equation}
\end{proof}

The following proposition combines with Lemma~\ref{Cn upper} to prove
that the Dehn function of $\Gamma$ admits an exponential upper bound, and
so completes our proof of Theorem~\ref{exp thm}.

\begin{prop} \label{Reduction}
There exists $K>0$ such that for all $n \geq 1$, the Dehn function of $\Gamma$
satisfies
$$
\Area(n) \ \leq \    K^n \, \max \set{ C(i) \mid 0 \leq i \leq 6n}.
$$
\end{prop}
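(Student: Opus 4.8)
The plan is to show that any length-$n$ word $w$ representing the identity in $\Gamma$ can be filled at a cost of at most $K^n$ times $\max\{C(i)\mid 0\le i\le 6n\}$, by first pushing $w$ into a ``normal-ish'' form that only involves the abelian generators $a,a^t,a^{t^2},\dots$ and then filling what remains using the commutator relations $[a,a^{t^i}]=1$. The key structural fact is that $\Gamma$ is metabelian with $\langle a,t\rangle\cong \Z\wr\Z$: using the relation $a^s=aa^t$ (equivalently $a^s=a^ta$) we can rewrite any occurrence of $s^{\pm1}$ in $w$, at bounded cost per letter, in terms of the $a^{t^i}$'s via the map $[\![a]\!]_s^{f(x)}=[\![a]\!]_t^{f(x+1)}$ supplied by Lemma~\ref{x + 1 lemma}. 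Since $w$ has length $\le n$, the number of $s$'s and $t$'s is $\le n$, so the powers $t^i$ that arise have $|i|\le n$ (or a small constant multiple thereof, hence the $6n$), and the polynomials $f$ that appear have degree $\le n$ and coefficients bounded by $2^{O(n)}$; each application of Lemma~\ref{x + 1 lemma} costs $D_c(n)\le c^2 4^{n+1}$, which is absorbed into $K^n$.

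First I would formalise a reduction procedure. Write $w$ as a word in $a^{\pm1},s^{\pm1},t^{\pm1}$. Since $w=1$ in $\Gamma$ and the quotient $\Gamma/[\Gamma,\Gamma]\cong\Z^2=\langle \bar s,\bar t\rangle$ (with $\bar a$ trivial), the exponent sums of $s$ and of $t$ in $w$ are both zero. Using $[s,t]=1$ at cost $\preceq n^2$, collect all the $s^{\pm1}$ and $t^{\pm1}$ letters: conjugating the intervening $a$-letters past them turns each maximal $a$-block into an expression of the form $[\![a]\!]$-type word in $s$ and $t$. More precisely, I would show by induction on the number of $s$'s that $w$ is equal, at cost $K^n\max_i C(i)$ for a suitable $K$, to a word of the form $[\![a]\!]_t^{g(x)}$ for some Laurent polynomial $g\in\Z[x,x^{-1}]$ whose ``width'' (spread of exponents) is $\le 6n$ and whose coefficient sizes are $\le 2^{O(n)}$: every time we move an $s$ to the far right past an $a$-block, we invoke Lemma~\ref{x + 1 lemma} to trade $s$-conjugation for $t$-conjugation, accruing a $D_c$-cost and roughly doubling the coefficient bound and increasing the width by $1$. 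After all $s$'s are eliminated (their exponent sum being zero lets them cancel) we are left with a word in $a^{\pm1},t^{\pm1}$ only.

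Then I would finish inside $\Z\wr\Z=\langle a,t\rangle$. A word in $a,t$ that represents the identity and involves only $t^i$ with $|i|\le 6n$ and total $a$-exponent bounded by $2^{O(n)}$ can be filled using the relations $[a,a^{t^i}]=1$ for $|i|\le 6n$: sorting the $a$-letters by their $t$-exponent is a bubble-sort whose number of transpositions is at most the square of the total $a$-exponent, i.e.\ $2^{O(n)}$, and each transposition costs one application of some $[a,a^{t^i}]=1$, which by definition costs $C(i)$ with $|i|\le 6n$ (here we also need $[a,a^{t^i}]=1$ to genuinely hold in $\Gamma$, which is Lemma~\ref{Cn upper}, and we need $C(i)=C(-i)=C(|i|)$, a symmetry that follows from inverting/conjugating the filling). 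Multiplying the $2^{O(n)}$ bound by $\max\{C(i)\mid 0\le i\le 6n\}$ and absorbing all the $D_c$-costs and the $n^2$ cost of rearranging $s$'s and $t$'s gives a bound of the form $K^n\max\{C(i)\mid 0\le i\le 6n\}$ after enlarging $K$.

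The main obstacle is bookkeeping the exponent width and coefficient growth through the elimination of the $s$-letters: each use of $a^s=aa^t$ shifts the highest $t$-exponent occurring, and one must check that after handling all $\le n$ of the $s$-letters the width is still $O(n)$ (this is where the constant $6$ comes from — a safe over-count) and the coefficients are still $2^{O(n)}$, so that the final bubble-sort cost stays exponential rather than, say, doubly exponential. A clean way to control this is to process the $s$'s one at a time from, say, the left, keeping an invariant of the form ``after $k$ steps the word equals $[\![a]\!]_t^{g_k}$ with $\deg g_k$ and $\mathrm{width}(g_k)\le n+2k$ and $\|g_k\|_\infty\le (\text{const})^{n}$,'' and to invoke Lemma~\ref{x + 1 lemma} with $c=\|g_k\|_\infty$ at each step; summing the resulting geometric-type series of $D_c$-costs is routine and is subsumed in $K^n$.
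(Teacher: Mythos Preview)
Your outline is essentially the paper's approach: use Lemma~\ref{x + 1 lemma} to replace all $s$--conjugation by $t$--conjugation, and then bubble--sort the resulting product of $a^{\pm t^j}$'s inside $\Z\wr\Z$ at a cost of at most $2^{O(n)}\cdot\max\{C(i)\mid 0\le i\le 6n\}$.

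The execution differs in one point worth flagging. You propose to eliminate the $s$--letters ``one at a time from the left'', maintaining an invariant that after $k$ steps the word equals $[\![a]\!]_t^{g_k}$. That invariant cannot hold as stated while $s$--letters remain, and moving an $s^{-1}$ across an $a$ forces you to express $a^{s^{-1}}$, which lies outside $\langle a,t\rangle$ (in the ring picture it corresponds to $(1+x)^{-1}$); Lemma~\ref{x + 1 lemma} is only set up for genuine polynomials and for the direction $s\to t$. The paper avoids this by a cleaner reorganisation: it first rewrites $w$ (at cost $O(n^3)$ using only $[s,t]=1$) as a product $\prod_{i=1}^k a^{\epsilon_i s^{\alpha_i} t^{\beta_i}}$ with $k\le n$ and $|\alpha_i|+|\beta_i|\le n$, then conjugates the whole word by $s^{-\alpha}$ with $\alpha=\min_i\alpha_i$ so that every $s$--exponent becomes non--negative and $\le 2n$, and only then applies Lemma~\ref{x + 1 lemma} once to each factor with $f(x)=x^{\alpha_i-\alpha}$ and $c=1$. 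This gives a product of $\le (2n+1)n$ terms $a^{\mu_i t^{\gamma_i}}$ with $|\mu_i|\le 2^{2n}$ and $|\gamma_i|\le 3n$, whence the sort uses at most $(2^{2n}(2n+1)n)^2$ commutators $[a,a^{t^i}]$ with $0\le i\le 6n$ --- exactly the source of the constant $6$. So your plan is right, but the ``process $s$'s one at a time'' mechanism should be replaced by the product--of--conjugates step plus a single global conjugation; after that the bookkeeping you describe goes through with no further obstacles.
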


\begin{proof}
Suppose a word $w = w(a,s,t)$ has length $n$ and represents $1$  in $\Gamma$.
Let $$\rho : \Gamma \to \Z^2  \ = \   \langle s,t \rangle$$  be the retract
arising from killing $a$.   We may assume $w$ contains at least one letter
$a^{\pm 1}$ for otherwise $w$ represents $1$ in  $\Z^2 = \langle s,t \rangle$
and so $\Area(w) \leq n^2$.     We will convert $w$ to successive words
$w_1, \cdots, w_5$ and then to the empty word, and will then sum the costs.

After each prefix $u$ of $w$ that ends in a letter $a^{\pm 1}$, insert the word
$s^{\alpha} t^{\beta} t^{-\beta} s^{-\alpha} $ where $t^{-\beta} s^{-\alpha}$
equals $\rho(u)$ in $\Z^2$.  The resulting word $w_1$ freely equals $w$.
Note that $\abs{\alpha} + \abs{\beta} \leq n$.

At a cost of at most $2n^3$, use the relation $[s,t]=1$ to change $w_1$ to a word
$$
w_2  \ = \  \prod_{i=1}^k  a^{\epsilon_i s^{\alpha_i} t^{\beta_i}}
$$
in which each $\epsilon_i = \pm 1$, $\abs{\alpha_i}  + \abs{\beta_i} \leq n$ and
$k \leq n$ ---  in front of each   $a^{\pm 1}$ in $w_1$, there is a subword on
$s^{\pm 1}$ and $t^{\pm 1}$ of length at most $2n$ to convert  to
$t^{-\beta_i} s^{-\alpha_i}$ for some $\alpha_i$ and $\beta_i$;
each of the  at most $n$  such subwords  costs at most $2n^2$.

Next we would like to apply Lemma~\ref{x + 1 lemma} with $f(x) = x^{\alpha_i}$
to each $a^{s^{\alpha_i}}$ to  reach a word that is a product of conjugates of
$a^{\pm 1}$ by powers of $t$.   But this presupposes that each $\alpha_i$
is non--negative, which may not be the case.  We work around this issue as follows.
Define $\alpha :=   \min_i \alpha_i$.  Then $0 \leq \alpha_i - \alpha \leq 2n$
for all $i$.  Instead of continuing to transform $w_2$, we will work with
${w_2}^{s^{-\alpha}}$.   We may do this because in any finitely presented
group the area of a  word $v$ representing $1$ is the same as that of $v^u$
for any word $u$.

It costs at most  $2n^2$  to convert
$a^{\epsilon_i s^{\alpha_i} t^{\beta_i}  s^{-\alpha}}$  to
$a^{\epsilon_i s^{\alpha_i-\alpha} t^{\beta_i}}$
since $\abs{\alpha}, \abs{\beta_i} \leq n$.
So, as $k \leq n$, at most $2n^3$ relations are required to convert
${w_2}^{s^{-\alpha}}$  to
$$
w_3 \ := \ \prod_{i=1}^k  a^{\epsilon_i s^{\alpha_i - \alpha} t^{\beta_i}}.
$$

We can now apply Lemma~\ref{x + 1 lemma} since  each ${\alpha_i - \alpha} \geq 0$.
It converts each $a^{s^{j}}$  in $w_3$  to $[\![a]\!]_t^{(1+x)^j}$, and so
yields
$$
w_4 \ := \
\prod_{i=1}^k  \left( [\![a]\!]_t^{(1+x)^{\alpha_i - \alpha}}   \right)^{\epsilon_i  t^{\beta_i}}
$$
at a total cost of at most   $nD_1(\alpha_i - \alpha) \leq nD_1(2n)$.
But then $w_4$ freely equals a word
$$
w_5 \ =  \ \prod_{i=1}^{l}  {a}^{\mu_i t^{\gamma_i}}
$$
for some integers $l, \mu_i, \gamma_i$ satisfying the following
(crude) inequalities
\begin{align*}
l & \ \leq \  (2n+1)n, \\
\abs{\mu_i} & \ \leq \  \max \set{ \ \left.\TB{j}{i} \ \right| \  1 \leq j \leq 2n, \  0 \leq i \leq j \  } \ \leq \ 2^{2n}, \\
\abs{\gamma_i} & \ \leq \  \max_i (\alpha_i - \alpha + \beta_i) \ \leq  \ 3n.
\end{align*}

Now, $w_5$ represents the identity in the subgroup
$$
\langle a, t \rangle \  = \  \Z \wr \Z  \   =  \
\left\langle  \, a, t \ \bigg| \  \left[a,a^{t^k}\right]=1 \ (k \in \Z)  \,
\right\rangle
$$
and is freely equal to a product of at most $2^{2n}(2n+1)n$  terms of
the form $a^{\pm t^j}$ in which $\abs{j} \leq 3n$.
So reordering these terms so as to collect all those in which the power of $t$
agree will give a word which freely reduces to the empty word.
This reordering can be achieved at a cost of  no more than
$\left( 2^{2n}(2n+1)n \right)^2$ commutators  $\left[a,a^{t^i}\right]$ in which
$0 \leq i \leq 6n$.

Summing our cost estimates, we have
$$
\Area(n) \ \leq \    2n^3 + 2n^3+ nD_1(2n) +
         \left( 2^{2n}(2n+1)n \right)^2 \, \max \set{ C(i) \mid 0 \leq i \leq 6n},
$$
and the result follows.
\end{proof}

\section{A quartic  upper bound on the Dehn function of $\Gamma_m$} \label{poly upper bound}

In this section we calculate using this presentation for $\Gamma_m$:
$$
\left\langle \, a,s,t \ \bigg| \   [a,a^t]=1, \ [s,t]=1, \ a^s=aa^t, \ a^m=1 \,
\right\rangle.
$$

The disparity between  $\Gamma_m$ and $\Gamma$ first appears in the following
analogue of Lemma~\ref{x + 1 lemma}.   Adapting the notation of Lemma~\ref{x + 1 lemma},
for a polynomial  $f(x) = \sum_{i=0}^n \hat{c}_ix^i $ in   $\Z[x]$
and letters $a$ and $r$, define words
\begin{align*}
\{\!\{a\}\!\}_r^f  & \ =  \ a^{c_0}  r^{-1}  a^{c_1}r^{-1}   \ldots  a^{c_{n-1}}r^{-1}  a^{c_n} r^n \\
{}^f\{\!\{a\}\!\}_r & \  = \  r^{-n} a^{c_n} r a^{c_{n-1}}    \ldots  r  a^{c_1}r   a^{c_0}
\end{align*}
where $c_i \in \set{0, \ldots, m-1}$ equals $\hat{c}_i$ modulo $m$.  Calculating as for  Lemma~\ref{x + 1 lemma} we find the exponential cost estimates do not appear since the relation $a^m=1$ can be used to keep the exponents of $a$ between $0$ and $m-1$, and  we get ---

\begin{lemma}  \label{better x + 1 lemma}
$\{\!\{a\}\!\}_s^{f(x)} = \{\!\{a\}\!\}_t^{f(x+1)}$ and
${}^{f(x)}\{\!\{a\}\!\}_s = {}^{f(x+1)}\{\!\{a\}\!\}_t$ in   $\Gamma_m$.
In each case, the cost of converting one word to the other in
$\Gamma_m$ is less than $K_m(n) :=  10m^2 n^2 +10$.
\end{lemma}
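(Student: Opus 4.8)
The plan is to mimic the inductive argument of Lemma~\ref{x + 1 lemma} essentially verbatim, replacing the exponent bookkeeping $[\![a]\!]_r^f$ by the reduced-exponent version $\{\!\{a\}\!\}_r^f$, and tracking how the torsion relation $a^m=1$ lets us keep every exponent of $a$ in the range $\set{0,\ldots,m-1}$. Concretely, I would write $f(x) = c_0 + x\hat f(x)$ with $\hat f(x) = \sum_{i=1}^n \hat c_i x^{i-1}$, and run the same six-step chain of equalities:
$$
\{\!\{a\}\!\}_s^{f(x)} \ = \ a^{c_0}\left(\{\!\{a\}\!\}_s^{\hat f(x)}\right)^{s}
\ = \ a^{c_0}\left(\{\!\{a\}\!\}_t^{\hat f(x+1)}\right)^{s}
\ = \ a^{c_0}\,\{\!\{a^s\}\!\}_t^{\hat f(x+1)}
\ = \ a^{c_0}\,\{\!\{a a^t\}\!\}_t^{\hat f(x+1)}
\ = \ a^{c_0}\,\{\!\{a\}\!\}_t^{(x+1)\hat f(x+1)}
\ = \ \{\!\{a\}\!\}_t^{f(x+1)},
$$
the first and last being free equalities, the second the induction hypothesis (cost $\le K_m(n-1)$), the third at most $2n$ applications of $[s,t]=1$, the fourth using $a^s=aa^t$ together with at most $O(mn)$ applications of $a^m=1$ to reduce the resulting powers of $a$, and the fifth using $[a,a^t]=1$ and $a^m=1$ to turn each block $(aa^t)^{c}$ (with $0\le c<m$) into $a^{c'} a^{c''t}$ at cost $O(m^2)$ per block with $O(m)$ more blocks coming from the binomial expansion of $(x+1)^{i-1}$. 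The key point — and the reason the exponential disappears — is that at every stage the exponents of $a$ are reduced mod $m$, so the quantities that were $\simeq 2^n$ in Lemma~\ref{x + 1 lemma} (sums and squares of binomial coefficients) are now replaced by their residues in $\set{0,\ldots,m-1}$; the only thing that still grows with $n$ is the \emph{number} of occurrences of $a$-syllables, which is linear in $n$, and the number of commutations needed to sort them, which is quadratic in $n$. Summing, $K_m(n-1) + O(n) + O(mn) + O(m^2 n)$ is comfortably below $10m^2n^2+10$ for a suitable reading of the constants, completing the induction; the statement $10m^2n^2+10$ is deliberately crude so one only needs to check the recursion closes, not optimise.

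The second identity, ${}^{f(x)}\{\!\{a\}\!\}_s = {}^{f(x+1)}\{\!\{a\}\!\}_t$, is handled by the symmetric argument: peel off the \emph{highest}-degree term instead of the constant term, writing $f(x) = \tilde f(x)\cdot x^{?}+\cdots$ — more cleanly, note ${}^{f}\{\!\{a\}\!\}_r$ is (freely) the reverse word of $\{\!\{a\}\!\}_r^{f}$ read with $r\mapsto r^{-1}$, or simply conjugate the first identity by an appropriate power of $r$ and invert; either way the same cost bound applies since reversing a word and inverting relators does not change areas. I would spell out whichever of these two reductions is shortest rather than redo the induction. I should also record once, as in the proof of Lemma~\ref{x + 1 lemma}, that reducing a power $a^k$ to $a^{k \bmod m}$ costs at most $\lceil |k|/m\rceil$ applications of $a^m=1$, and that the powers of $a$ arising in step (IV)–(V) are bounded by $m$ times a binomial coefficient $\binom{j}{i}$ with $j\le n$; but since these get reduced mod $m$ immediately, the relevant contribution to the cost is $O(m)$ per syllable, not $O(2^n)$.

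The main obstacle is purely bookkeeping: making sure that when $(aa^t)^{c}$ is expanded and the powers of $a$ and $a^t$ are separately reduced mod $m$, one correctly accounts for the carries — i.e. that $(aa^t)^c = a^{c} a^{ct}$ holds on the nose in $\Gamma_m$ once $[a,a^t]=1$ is applied, and only then does one reduce $a^c$ and $a^{ct}$ separately using $a^m=1$ — and that the "$+1$" shift $f(x)\mapsto f(x+1)$ interacts correctly with reduction mod $m$ (it does, because the substitution happens at the polynomial level before any exponent is evaluated, so no residue information is lost). I do not expect any genuine difficulty beyond verifying the recursion $K_m(n)\ge K_m(n-1)+O(m^2n)$ holds with the stated constants, which it does since $10m^2n^2 - 10m^2(n-1)^2 = 10m^2(2n-1) \ge 10m^2 n$ for $n\ge 1$, leaving ample slack for the lower-order terms.
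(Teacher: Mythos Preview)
Your proposal is correct and follows exactly the approach the paper itself takes --- indeed, the paper's own proof is merely the one-line remark ``Calculating as for Lemma~\ref{x + 1 lemma} we find the exponential cost estimates do not appear since the relation $a^m=1$ can be used to keep the exponents of $a$ between $0$ and $m-1$,'' and you have supplied the details that sentence elides. Two minor remarks: the phrase ``$O(m)$ more blocks coming from the binomial expansion'' is misleading (the number of $a$-syllables in $\{\!\{a\}\!\}_t^{\hat f(x+1)}$ is at most $n$, independent of $m$; it is the \emph{size} of each exponent, not the number of syllables, that the torsion relation controls), and your reversal/conjugation reduction for the second identity is not quite right as stated --- but the symmetric induction you mention first works without difficulty.
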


%\begin{proof}
%The proof is the same as that of Lemma~\ref{x + 1 lemma} except the estimate
%for  (V)  becomes $n(m-1)^2+n \leq m^2n$: each of the $n$ subwords $(aa^t)^{r}$
%transformed to $a^{r} a^{r t}$ at a cost of at most $r^2$ have $r < m$,
%and then, after collecting  terms of the form  $a^{r_1t^k} a^{r_2t^k}$
%together as $a^{(r_1+r_2)t^k}$, the relation $a^m = 1$ may have to be used
%once to give $a^{(r_1+r_2-m)t^k}$ if  $r_1+r_2 \geq m$.
%Similarly (VI) may use $a^m=1$ once.  So in place of \eqref{summing stuff}
%we have
%\begin{equation} \label{summing less stuff}
%K_m(n-1)    + 2n + n+  m^2n +1 \ \leq \ K_m(n).
%\end{equation}
%For ${}^f\{\!\{a\}\!\}_s =  {}^{f(x+1)}\{\!\{a\}\!\}_t$
%the argument is essentially the same except we use $a^s = a^ta$ in place of
%$a^s = aa^t$ and have to transform subwords $(a^ta)^{r}$ to $a^{rt} a^{r}$.
%The only impact on our cost estimates is that each $a^s = a^ta$ costs two
%(one $a^s = aa^t$ and one $[a,a^t]=1$), and so  the term $n$ in the left side
%of~\eqref{summing less stuff} becomes $2n$, and the estimate on the right side
%still holds.
%\end{proof}

For $f,g \in \Z[x]$, define
$$
\begin{array}{lll}
\sigma_{f,g}  & := \  \{\!\{a\}\!\}_s^f \  \{\!\{a\}\!\}_s^g \  \left(\{\!\{a\}\!\}_s^{f+g}\right)^{-1} \\ %& = \   \{\!\{a\}\!\}_s^f
% \  \{\!\{a\}\!\}_s^g \   {}^{-f-g}\{\!\{a\}\!\}_s \\
\tau_{f,g} & := \  \{\!\{a\}\!\}_t^f \  \{\!\{a\}\!\}_t^g \  \left(\{\!\{a\}\!\}_t^{f+g}\right)^{-1}. %& = \   \{\!\{a\}\!\}_t^f \,
%\{\!\{a\}\!\}_t^g \   {}^{-f-g}\{\!\{a\}\!\}_t.
\end{array}
$$

It follows from Lemma~\ref{Cn upper}  that $\tau_{f,g}$ represents the identity in $\Gamma$.
Our next two lemmas are preparation for  estimating $\Area(\tau_{f,g})$.  We omit the proof of the first.

\begin{lemma} \label{small change}
For $n \geq  \max \set{ \degg f, \degg g}$,
\begin{align*}
\Area (\tau_{f,g}) & \ \leq \  \Area(\tau_{f \pm 1,g}) +2  \\
\Area (\tau_{f,g}) & \ \leq \    \Area(\tau_{f ,g \pm x^n}) +2.
\end{align*}
\end{lemma}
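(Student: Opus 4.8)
The plan is to reduce everything to one elementary observation about how the normal-form words $\{\!\{a\}\!\}_t^{h}$ change under small perturbations of the exponent polynomial $h$, together with the standard facts that area is unchanged under free equality and under conjugation and is subadditive along a sequence of conversions. The observation is this: for any $h \in \Z[x]$ and any $n \ge \degg h$, in $\Gamma_m$ one has $\{\!\{a\}\!\}_t^{h \pm 1} = a^{\pm 1}\,\{\!\{a\}\!\}_t^{h}$ and $\{\!\{a\}\!\}_t^{h \pm x^n} = \{\!\{a\}\!\}_t^{h}\, a^{\pm t^n}$, each at a cost of at most one application of $a^m = 1$. Indeed, adding $\pm 1$ to $h$ alters only the leading block $a^{c_0}$ of $\{\!\{a\}\!\}_t^{h}$, and $a^{\pm 1}a^{c_0}$ already equals the new leading block $a^{(c_0 \pm 1)\bmod m}$ by a free equality, unless that block wraps around ($c_0 = m-1$ when adding $1$, or $c_0 = 0$ when subtracting $1$), in which case a single use of $a^m = 1$ restores the equality; adding $\pm x^n$ alters only the block $a^{c_n}$ immediately preceding the terminal $t^n$, and the same argument applies there. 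The hypothesis $n \ge \max\{\degg f, \degg g\}$ is used only to let us write $\{\!\{a\}\!\}_t^{f}$, $\{\!\{a\}\!\}_t^{g}$, $\{\!\{a\}\!\}_t^{f+g}$ and all their perturbations with a common value of $n$, so that the $x^n$ perturbation lands on an honest block in each.

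Granting the observation, here is the argument for $f \mapsto f+1$. Of the three normal-form words in $\tau_{f,g} = \{\!\{a\}\!\}_t^{f}\,\{\!\{a\}\!\}_t^{g}\,\bigl(\{\!\{a\}\!\}_t^{f+g}\bigr)^{-1}$, only $\{\!\{a\}\!\}_t^{f}$ and $\{\!\{a\}\!\}_t^{f+g}$ change, so the observation, used twice, converts $\tau_{f+1,g} = \{\!\{a\}\!\}_t^{f+1}\,\{\!\{a\}\!\}_t^{g}\,\bigl(\{\!\{a\}\!\}_t^{f+g+1}\bigr)^{-1}$, at a cost of at most $2$, into $a\,\{\!\{a\}\!\}_t^{f}\,\{\!\{a\}\!\}_t^{g}\,\bigl(\{\!\{a\}\!\}_t^{f+g}\bigr)^{-1} a^{-1} = a\,\tau_{f,g}\,a^{-1}$. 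Reading this backwards, $\tau_{f,g}$ converts into $a^{-1}\,\tau_{f+1,g}\,a$ at a cost of at most $2$, whence $\Area(\tau_{f,g}) \le 2 + \Area(a^{-1}\tau_{f+1,g}\,a) = 2 + \Area(\tau_{f+1,g})$ by conjugation-invariance of area. The case $f \mapsto f-1$ is the mirror image, with $a^{-1}$ in place of $a$. For $g \mapsto g \pm x^n$ the words $\{\!\{a\}\!\}_t^{g}$ and $\{\!\{a\}\!\}_t^{f+g}$ are the ones that change, and the observation converts $\tau_{f,g \pm x^n}$, at cost at most $2$, into $\{\!\{a\}\!\}_t^{f}\,\{\!\{a\}\!\}_t^{g}\, a^{\pm t^n} a^{\mp t^n}\,\bigl(\{\!\{a\}\!\}_t^{f+g}\bigr)^{-1}$, whose middle factor $a^{\pm t^n} a^{\mp t^n}$ freely cancels to leave $\tau_{f,g}$; so $\Area(\tau_{f,g}) \le \Area(\tau_{f, g \pm x^n}) + 2$.

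I do not expect a real obstacle here --- presumably this is why the proof is omitted. The only points needing care are the bookkeeping that pins the additive constant at exactly $2$ (one possible use of $a^m = 1$ for each of the two altered normal-form words, and exactly two of the three are altered by either perturbation) and the handling of the four wrap-around boundary cases, $c_0 \in \{0, m-1\}$ and $c_n \in \{0, m-1\}$, each of which I claim costs only a single relation; the degree hypothesis contributes nothing beyond aligning the lengths of the normal forms of $f$, $g$ and $f+g$.
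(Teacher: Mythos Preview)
Your argument is correct and is exactly the sort of routine verification the authors had in mind when they wrote ``We omit the proof'': the paper gives no proof of this lemma, and your reduction to the observation that perturbing $h$ by $\pm 1$ or $\pm x^n$ alters a single $a$-block of $\{\!\{a\}\!\}_t^h$ (at the cost of at most one application of $a^m=1$) is the natural way to fill the gap. Your bookkeeping --- two altered normal-form words per perturbation, conjugation-invariance of area for the $f\pm 1$ case, free cancellation of $a^{\pm t^n}a^{\mp t^n}$ for the $g\pm x^n$ case --- is accurate and pins the constant at $2$ as stated.
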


%\begin{proof}
%If the constant terms of $f \pm 1$ and $f+g \pm 1$ are between $0$ and $m-1$, then
%$\tau_{f \pm 1,g} = (\tau_{f,g})^{a^{\mp 1}}$ as words.  Otherwise $\tau_{f \pm 1,g} = (\tau_{f,g})^{a^{\mp 1}}$ follows from at most two applications of the relation $a^m=1$.   Similarly, if the coefficients of $x^n$ in $g \pm x^n$ and $f+g \pm x^n$ are    between $0$ and $m-1$, then $\tau_{f,g \pm x^n}$ freely equals $\tau_{f,g}$.  Otherwise,  $\tau_{f,g \pm x^n} = \tau_{f,g}$ via at most two applications of  $a^m=1$.
%\end{proof}

\begin{lemma} \label{reduce degree}
For  $n := 1+ \max \set{ \degg f, \degg g}$,
\begin{align*}
\Area \left( \tau_{(x+1)f,(x+1)g} \right) & \ \leq \
\Area \left( \tau_{f,g} \right) +  6K_m(n).
\end{align*}
\end{lemma}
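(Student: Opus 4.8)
\textbf{Proof proposal for Lemma~\ref{reduce degree}.}

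The plan is to relate $\tau_{(x+1)f,(x+1)g}$ to $\tau_{f,g}$ by pushing the factor of $(x+1)$ through the change-of-variable substitution supplied by Lemma~\ref{better x + 1 lemma}. The key observation is that $\{\!\{a\}\!\}_s^{(x+1)h}$ and $\{\!\{a\}\!\}_t^{(x+1)h(x+1)}$ are nearly the same word up to a bounded-cost manipulation: indeed, by Lemma~\ref{better x + 1 lemma} applied to the polynomial $(x+1)h$ we have $\{\!\{a\}\!\}_s^{(x+1)h(x)} = \{\!\{a\}\!\}_t^{(x+2)h(x+1)}$, which is not quite what we want, so instead I would run the substitution in the other direction or, more cleanly, express $\{\!\{a\}\!\}_t^{(x+1)h}$ itself as the conjugate $\left(\{\!\{a\}\!\}_t^{h}\right)^{\text{something}}$ times a correction. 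The honest route: start from the word $\{\!\{a\}\!\}_s^{h}$, note that its image under $a^s = a a^t$ (equivalently the $r \mapsto$ shift implicit in Lemma~\ref{better x + 1 lemma}) converts it at cost at most $K_m(n)$ to $\{\!\{a\}\!\}_t^{h(x+1)}$, and that multiplying a polynomial by $x$ corresponds precisely to conjugating the bracket word by $t$ (this is visible from the definition of $\{\!\{a\}\!\}_t^f$, since $x\cdot x^i = x^{i+1}$ just shifts every $t$-exponent up by one), while multiplying by $x+1$ means "the original plus its $t$-conjugate." So $\{\!\{a\}\!\}_t^{(x+1)h}$ is freely close to $\{\!\{a\}\!\}_t^{h}\cdot \left(\{\!\{a\}\!\}_t^{h}\right)^t$ up to rearranging $a$-powers, and similarly on the $s$-side.

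Concretely, I would argue as follows. First, for any $h$ with $\deg h < n$, the word $\{\!\{a\}\!\}_s^{(x+1)h}$ equals $\{\!\{a\}\!\}_s^{h} \left(\{\!\{a\}\!\}_s^{h}\right)^s$ up to free equality and at most a bounded number (in terms of $m$ and $n$) of applications of $[a,a^t]=1$ and $a^m=1$ to collect coefficients — call this cost $\le 2K_m(n)$ after absorbing constants, or simply bound it by $K_m(n)$ with room to spare. Hence
\begin{align*}
\tau_{(x+1)f,(x+1)g}
&= \{\!\{a\}\!\}_s^{(x+1)f}\,\{\!\{a\}\!\}_s^{(x+1)g}\,\left(\{\!\{a\}\!\}_s^{(x+1)(f+g)}\right)^{-1} \\
&= \left(\{\!\{a\}\!\}_s^{f}\,\{\!\{a\}\!\}_s^{g}\,\left(\{\!\{a\}\!\}_s^{f+g}\right)^{-1}\right)\,\left(\text{conjugate by }s\text{ of the same}\right)
\end{align*}
at a cost controlled by $O(K_m(n))$; that is, $\tau_{(x+1)f,(x+1)g}$ reduces to $\sigma_{f,g}\cdot(\sigma_{f,g})^s$ up to cost $\le 3K_m(n)$. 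Then I would use Lemma~\ref{better x + 1 lemma} three times (once for each of the three bracket words making up $\sigma_{f,g}$) to convert $\sigma_{f,g}$ to $\tau_{f(x+1),g(x+1)}$ at cost $\le 3K_m(n)$, and handle the conjugate copy the same way — but since conjugation by $s$ then $t$ commutes with these moves (using $[s,t]=1$, whose cost is already folded into $K_m$), the conjugated copy cancels against the original once both are expressed in $t$-form, because $\sigma_{f,g}$ and its $s$-conjugate become $\tau_{f(x+1),g(x+1)}$ and its $t$-conjugate, and the $t$-conjugate of a word representing the identity in $\langle a,t\rangle$ still represents the identity, contributing nothing new beyond the move cost. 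Finally, re-index $f(x+1)\leadsto f$, $g(x+1)\leadsto g$ (a harmless relabeling since the statement is for arbitrary $f,g$), giving $\Area(\tau_{(x+1)f,(x+1)g}) \le \Area(\tau_{f,g}) + 6K_m(n)$.

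The main obstacle is bookkeeping the constant: making sure that "multiplying the polynomial by $x+1$" really does cost at most $K_m(n)$ per bracket word rather than something exponential. This is exactly where $\Gamma_m$ differs from $\Gamma$ — the relation $a^m=1$ lets us keep every $a$-exponent in $\{0,\dots,m-1\}$ while merging the word with its $t$-shift, so the number of commutator and torsion relations used is polynomial in $m$ and $n$, matching the $10m^2n^2+10$ shape of $K_m(n)$. I would state this merging estimate as an explicit sub-claim (perhaps citing the computation behind Lemma~\ref{better x + 1 lemma}) and then the factor of $6$ in $6K_m(n)$ comes from: three bracket words in $\sigma_{f,g}$, each costing $\le K_m(n)$ to translate $s\to t$, plus three more units of $K_m(n)$ to absorb the "$\times(x+1)$ splitting" and the conjugate-copy cancellation — generously rounding up so that no delicate tracking is needed.
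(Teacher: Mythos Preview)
Your proposal contains a genuine gap, and it is precisely the step you flag as ``the main obstacle.'' You assert that $\{\!\{a\}\!\}_s^{(x+1)h}$ can be converted to $\{\!\{a\}\!\}_s^{h}\,\bigl(\{\!\{a\}\!\}_s^{h}\bigr)^{s}$ at cost at most $K_m(n)$, but this conversion is exactly the word $\sigma_{h,\,xh}$ (since $\{\!\{a\}\!\}_s^{xh}$ is freely $\bigl(\{\!\{a\}\!\}_s^{h}\bigr)^{s}$), and bounding the area of such $\sigma$- or $\tau$-words is the whole point of Proposition~\ref{adding two polys}, whose inductive proof \emph{uses} Lemma~\ref{reduce degree}. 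So the splitting cost you need is not available yet; assuming it here is circular. The relation $a^m=1$ keeps exponents bounded, but it does not by itself let you commute $a^{s^i}$ past $a^{s^j}$ cheaply --- that is a derived fact, not a defining relation. There are further issues downstream: even granting the splitting, the rearrangement into $\sigma_{f,g}\,(\sigma_{f,g})^s$ requires commuting whole bracket words past one another (more $\tau$-type cost), and the product would then contribute $2\,\Area(\tau_{f,g})$, not $\Area(\tau_{f,g})$ --- your ``cancellation'' of the conjugate copy is not a valid area argument. Finally, the relabelling $f(x+1)\leadsto f$ at the end is not legitimate: the lemma is a statement about fixed $f,g$.

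The paper's proof avoids all of this with one clean trick. Rather than trying to split off the factor $(x+1)$ directly, it first passes from $\tau$ to $\sigma$ via three applications of Lemma~\ref{better x + 1 lemma}: since $\{\!\{a\}\!\}_t^{p(x+1)}=\{\!\{a\}\!\}_s^{p(x)}$, taking $p(x)=x\,f(x-1)$ turns $\{\!\{a\}\!\}_t^{(x+1)f(x)}$ into $\{\!\{a\}\!\}_s^{x f(x-1)}$ at cost $K_m(n)$ each. The point is that after the substitution $x\mapsto x-1$ the awkward factor $(x+1)$ has become a factor of $x$, and a factor of $x$ in an $s$-bracket word is \emph{freely} a conjugation: $\{\!\{a\}\!\}_s^{xh}=\bigl(\{\!\{a\}\!\}_s^{h}\bigr)^{s}$. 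So $\sigma_{xf(x-1),\,xg(x-1)}$ is freely a conjugate of $\sigma_{f(x-1),\,g(x-1)}$, and three more applications of Lemma~\ref{better x + 1 lemma} convert that back to (a conjugate of) $\tau_{f,g}$. Conjugation does not change area, so the total overhead is exactly $6K_m(n)$. The moral: do not try to peel off $(x+1)$ as $1+x$; first shift variables so it becomes $x$, which peels off for free.
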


\begin{proof} In $\Gamma_m$,
$$
\tau_{(x+1)f, (x+1)g} \ = \ \sigma_{xf(x-1), xg(x-1)} \ = \
\left( \sigma_{f(x-1), g(x-1)} \right)^t  \ = \  \left(\tau_{f, g} \right)^t,
$$
where the second equality is free, and the first and third
stem from  applications of Lemma~\ref{better x + 1 lemma}.
% to
%$xf(x+1)$,  $xg(x+1)$, $-xf(x+1) - xg(x+1)$, $f(x-1)$, $g(x-1)$,
%and $-f(x-1)-g(x-1)$, each costing at most $K_m(n)$.
As $\tau_{f,g}$ and $\left( \tau_{f,g}  \right)^t$ have the same area,
the result is established.
\end{proof}

\begin{prop} \label{adding two polys}
Suppose $f,g$ are  polynomials of degree at most $n$. Then
$\tau_{f,g}$ represents the identity in $\Gamma_m$ and
$$
\Area\left(\tau_{f,g}\right) \ \leq \  6nK_m(n) + 4mn.
$$
\end{prop}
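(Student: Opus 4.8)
The plan is to induct on $n$, using Lemmas~\ref{small change} and~\ref{reduce degree} to reduce the degree of the pair $(f,g)$ by a factor related to $(x+1)$ at each step, while keeping track of the accumulated cost. The base case $n=0$ is essentially trivial: when $f,g$ are constants, $\tau_{f,g}$ is a word on $a$ alone, namely $a^{c_0}a^{c_0'}\bigl(a^{c_0+c_0' \bmod m}\bigr)^{-1}$ (up to free reduction), which can be killed using only the relation $a^m=1$ at cost at most $m$ or so; this is absorbed into the $4mn$ term with room to spare.

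For the inductive step, given $f,g$ of degree at most $n$ with $n\ge 1$, I would first use Lemma~\ref{small change} to adjust $f$ and $g$ (adding $\pm 1$ to $f$ and $\pm x^n$-type corrections to $g$, each at additive cost $2$ per adjustment) so that both become divisible by $x+1$. Concretely, one can arrange $f(-1)\equiv 0$ and $g(-1)\equiv 0$ modulo $m$ — or even exactly zero over $\Z$ — by changing the constant term of $f$ (cost $\le 2m$, since we may need up to $m$ unit changes) and then correcting $g$ similarly; here the bookkeeping gives the $4mn$-type contribution across all $n$ levels. Once $(x+1)\mid f$ and $(x+1)\mid g$, write $f=(x+1)\tilde f$, $g=(x+1)\tilde g$ with $\deg\tilde f,\deg\tilde g\le n-1$, and apply Lemma~\ref{reduce degree}: this shows
$$
\Area(\tau_{f,g}) \ \le \ \Area(\tau_{\tilde f,\tilde g}) + 6K_m(n).
$$
By the inductive hypothesis $\Area(\tau_{\tilde f,\tilde g}) \le 6(n-1)K_m(n-1) + 4m(n-1)$, and since $K_m$ is increasing this is at most $6(n-1)K_m(n) + 4m(n-1)$; adding $6K_m(n)$ and the $O(m)$ cost of the degree-lowering adjustments yields the claimed bound $6nK_m(n) + 4mn$. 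The fact that $\tau_{f,g}$ represents the identity in $\Gamma_m$ follows from the remark after Lemma~\ref{Cn upper} (via $\Gamma_m$ being a quotient of $\Gamma$, or directly since the same manipulations are valid), so the only real content is the quantitative cost estimate.

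The main obstacle I anticipate is the divisibility adjustment: Lemma~\ref{small change} lets us modify $f$ by $\pm 1$ and $g$ by $\pm x^n$, but to make $f$ divisible by $x+1$ we need to control $f(-1)$, and changing the constant term by $\pm1$ shifts $f(-1)$ by $\pm1$, so reaching a multiple of $m$ (enough, since we only need $c_i$-reductions modulo $m$) costs at most $m$ such moves — this is where the $4mn$ term genuinely comes from, summed over the $n$ recursion levels. For $g$ the situation is slightly more delicate since $g(-1) = \sum (-1)^i \hat c_i$ involves all coefficients, but adding multiples of $x^n$ (or lower $x^j(x+1)$-type corrections already handled inductively) suffices to zero out $g(-1)$ modulo $m$; one must be careful that these corrections do not increase the degree beyond $n$, which is why Lemma~\ref{small change} is stated for $n\ge\max\{\deg f,\deg g\}$ rather than with a strict degree bound. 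Keeping the constants genuinely crude — absorbing all the "$+2$"s and base-case costs into $4mn$ — is the right move here, and I would not attempt a sharp accounting.
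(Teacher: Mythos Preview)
Your approach is essentially identical to the paper's: induct on $n$, use Lemma~\ref{small change} to adjust $f$ by a constant and $g$ by a multiple of $x^n$ (at cost $\le 4m$) so that both become divisible by $x+1$, then apply Lemma~\ref{reduce degree} and the inductive hypothesis to obtain the bound $6nK_m(n)+4mn$. Your discussion of the divisibility adjustment is exactly right --- one only needs $f(-1)\equiv 0$ and $g(-1)\equiv 0$ modulo $m$, achievable in at most $m$ unit steps each --- and the paper handles the base case and the degree bookkeeping with the same level of (deliberate) crudeness you propose.
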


\begin{proof} Induct on $n$.   If $n=0$, then $\tau_{f,g}$ freely equals the empty word.
For the induction step assume $n \geq 1$ and $f$ has degree at least one.
(Otherwise,  $\tau_{f,g}$ freely equals the empty word and the result is immediate.)
Add a suitable constant to $f$ and  a suitable constant times $x^n$ to $g$
to obtain polynomials $f_0 = (x+1)f_1$ and $g_0 = (x+1)g_1$  both divisible by $x+1$.
By Lemmas~\ref{small change} and~\ref{reduce degree},
$$
\Area (\tau_{f,g}) \ = \  \Area(\tau_{f_0,g_0}) + 4m \ \leq \
\Area(\tau_{f_1,g_1}) + 6K_m(n) + 4m.
$$
So, by induction,
$$
\Area (\tau_{f,g}) \ \leq \   6(n-1)K_m(n-1) + 4m(n-1) + 6K_m(n) +4m \ \leq \  6nK_m(n) +4mn.  \vspace*{-5mm}
$$
\end{proof}

We are now ready to prove Theorem~\ref{poly thm}.
Suppose a word $w = w(a,s,t)$ has length $n$ and represents $1$  in $\Gamma_m$.
At a cost of at most $2n^3$, proceed to a word
$$
w_2 \ = \ \prod_{i=1}^k  a^{\epsilon_i s^{\alpha_i } t^{\beta_i}}
$$
for which $\epsilon_i = \pm 1$, and $\abs{\alpha_i} + \abs{\beta_i} \leq n$,
and $k \leq n$ exactly as in our proof of Proposition~\ref{Reduction}.
Define $\alpha : = \min_i \alpha_i$ and  $\beta : = \min_i \beta_i$.
Then $w_2$ has the same area as ${w_2}^{s^{-\alpha} t^{-\beta}}$.
It costs at most $2n^3$ applications of $[s,t]=1$ to convert
${w_2}^{s^{-\alpha} t^{-\beta}}$ to
$$
w'_3 \ := \ \prod_{i=1}^k  a^{\epsilon_i s^{\alpha_i - \alpha } t^{\beta_i - \beta}}.
$$

Let $f_i(x) :=x^{\alpha_i - \alpha}$.  Then $w'_3$ freely equals
$
\prod_{i=1}^k  \left( \{\!\{a\}\!\}^{f_i}_s \right)^{\epsilon_i  t^{\beta_i-\beta}}
$
which, by Lemma~\ref{better x + 1 lemma},
becomes
$$
w'_4 \ := \ \prod_{i=1}^k  \left( \{\!\{a\}\!\}^{f_i(x+1)}_t \right)^{\epsilon_i  t^{\beta_i-\beta}}
$$
at a cost of at most  $nK_m(n)$.
By applying Proposition~\ref{adding two polys} in the case $g=-f$,
we can change each $\left( \{\!\{a\}\!\}^{f_i(x+1)}_t \right)^{\epsilon_i}$
for which $\epsilon_i = -1$ to  $ \{\!\{a\}\!\}^{-f_i(x+1)}_t$
at a total cost of at most $6n^2K_m(n)+4mn^2$.  We then have a word
freely equal to
$$
w'_5 \ := \ \prod_{i=1}^k   \{\!\{a\}\!\}^{g_i}_t
$$
where $g_i(x) : = \epsilon_i x^{\beta_i} f_i(x+1)$.
For $1 \leq i < k$, Proposition~\ref{adding two polys} establishes the equality
$$
\{\!\{a\}\!\}^{\sum_{i=1}^j g_i}_t \prod_{i=j+1}^k   \{\!\{a\}\!\}^{g_i}_t \  = \
\{\!\{a\}\!\}^{\sum_{i=1}^{j+1} g_i}_t \prod_{i=j+2}^k   \{\!\{a\}\!\}^{g_i}_t
$$
at a cost of  $6nK_m(n) +4mn$.  So $w'_5$ can be transformed to
$w'_6 :=   \{\!\{a\}\!\}^{\sum_{i=1}^k g_i}_t$ at a cost of at most $6n^2K_m(n) + 4mn^2$.
Since no letters $s^{\pm 1}$ occur in $w'_6$, it represents the identity in
$$
C_m \wr \Z   \ = \
\left\langle  \, a, t \ \bigg|
\ a^m=1,  \left[a,a^{t^k}\right]=1 \ (k \in \Z)  \, \right\rangle
$$
and so $\sum_{i=1}^k g_i = 0$ and $w'_6$ is in fact the empty word.

Theorem~\ref{poly thm} then follows from summing the cost estimates:
$$
\Area(n) \ \leq \   4n^3 + nK_m(n) +  12n^2K_m(n) + 8mn^2.
$$

\begin{remark}
The Dehn function of $\Gamma_m$ is bounded below by $n^2$.  (Use the argument in Example~\ref{Z2} or deduce this from the fact that $\Gamma_m$ is not hyperbolic since it has a $\Z^2$ subgroup.)
It seems that one might be able to  do  the estimates above more carefully
and use essentially the method to obtain a cubic upper bound for the Dehn function.
However, we would be surprised if these types of combinatorial arguments could lead to
the quadratic upper bound obtained  via geometric methods.
\end{remark}

\bibliographystyle{plain}
\bibliography{$HOME/Dropbox/Bibliographies/bibli}

\def\cprime{$'$}
\begin{thebibliography}{10}

\bibitem{ABDDY}
A.~Abrams, N.~Brady, P.~Dani, M.~Duchin, and R.~Young.
\newblock Pushing fillings in right--angled {A}rtin groups.
\newblock \href{http://front.math.ucdavis.edu/1004.4253}{arXiv:1004.4253},
  2010.

\bibitem{AO}
G.~Arzhantseva and D.~Osin.
\newblock Solvable groups with polynomial {D}ehn functions.
\newblock {\em Trans. Amer. Math. Soc.}, 354(8):3329--3348, 2002.

\bibitem{BNW}
L.~Bartholdi, M.~Neuhauser, and W.~Woess.
\newblock Horocyclic products of trees.
\newblock {\em J. Eur. Math. Soc.}, 10(3):771--816, 2008.

\bibitem{Baumslag}
G.~Baumslag.
\newblock A finitely presented metabelian group with a free abelian derived
  group of infinite rank.
\newblock {\em Proc. Amer. Math. Soc.}, 35:61--62, 1972.

\bibitem{BD}
G.~Baumslag and E.~Dyer.
\newblock The integral homology of finitely generated metabelian groups. {I}.
\newblock {\em Amer. J. Math.}, 104(1):173--182, 1982.

\bibitem{BMS}
G.~Baumslag, C.~F. Miller, III, and H.~Short.
\newblock Isoperimetric inequalities and the homology of groups.
\newblock {\em Invent. Math.}, 113(3):531--560, 1993.

\bibitem{Brady_personal}
N.~Brady.
\newblock Personal communication.
\newblock 2010.

\bibitem{Bridson6}
M.~R. Bridson.
\newblock The geometry of the word problem.
\newblock In M.~R. Bridson and S.~M. Salamon, editors, {\em Invitations to
  Geometry and Topology}, pages 33--94. O.U.P., 2002.

\bibitem{CRW}
C.~M. Campbell, E.~F. Robertson, and P.~D. Williams.
\newblock On presentations of {${\rm PSL}(2,p^n)$}.
\newblock {\em J. Austral. Math. Soc. Ser. A}, 48(2):333--346, 1990.

\bibitem{Cleary}
S.~Cleary.
\newblock Distortion of wreath products in some finitely presented groups.
\newblock {\em Pacific J. Math.}, 228(1):53--61, 2006.

\bibitem{CRwithcorrection}
S.~Cleary and T.~R. Riley.
\newblock A finitely presented group with unbounded dead end depth.
\newblock {\em Proc. Amer. Math. Soc.}, 134(2):343--349, 2006.
\newblock Erratum: \emph{Proc. Amer. Math. Soc.}, 136(7):2641--2645, 2008.

\bibitem{CT}
Y.~de~Cornulier and R.~Tessera.
\newblock Metabelian groups with quadratic dehn function and baumslag-solitar
  groups.
\newblock {\em Confluentes Math.}, 2(4):431--443, 2010.

\bibitem{Dison}
W.~Dison.
\newblock An isoperimetric function for {B}estvina-{B}rady groups.
\newblock {\em Bull. Lond. Math. Soc.}, 40(3):384--394, 2008.

\bibitem{Drutu5}
C.~Dru\c{t}u.
\newblock Filling in solvable groups and in lattices in semisimple groups.
\newblock {\em Topology}, 43:983--1033, 2004.

\bibitem{Gersten}
S.~M. Gersten.
\newblock Isoperimetric and isodiametric functions.
\newblock In G.~Niblo and M.~Roller, editors, {\em Geometric group theory I},
  number 181 in LMS lecture notes. Camb. Univ. Press, 1993.

\bibitem{GHR}
S.~M. Gersten, D.~F. Holt, and T.~R. Riley.
\newblock Isoperimetric functions for nilpotent groups.
\newblock {\em GAFA}, 13:795--814, 2003.

\bibitem{GLSZ}
R.~I. Grigorchuk, P.~Linnell, T.~Schick, and A.~{\.Z}uk.
\newblock On a question of {A}tiyah.
\newblock {\em C. R. Acad. Sci. Paris S\'er. I Math.}, 331(9):663--668, 2000.

\bibitem{Gromov}
M.~Gromov.
\newblock Asymptotic invariants of infinite groups.
\newblock In G.~Niblo and M.~Roller, editors, {\em Geometric group theory II},
  number 182 in LMS lecture notes. Camb. Univ. Press, 1993.

\bibitem{Gromov6}
M.~Gromov.
\newblock {\em Carnot-Carath\'eodory spaces seen from within}, volume 144 of
  {\em Progress in Mathematics}, pages 79--323.
\newblock Birkh\"auser, 1996.

\bibitem{GKKL}
R.~M. Guralnick, W.~M. Kantor, M.~Kassabov, and A.~Lubotzky.
\newblock Presentations of finite simple groups: a quantitative approach.
\newblock {\em J. Amer. Math. Soc.}, 21(3):711--774, 2008.

\bibitem{LP}
E.~Leuzinger and Ch. Pittet.
\newblock On quadratic {D}ehn functions.
\newblock {\em Math. Z.}, 248(4):725---755, 2004.

\bibitem{Pittet}
Ch. Pittet.
\newblock Isoperimetric inequalities for homogeneous nilpotent groups.
\newblock In R.~Charney, M.~Davis, and M.~Shapiro, editors, {\em Geometric
  Group Theory}, volume~3 of {\em Ohio State University, Mathematical Research
  Institute Publications}, pages 159--164. de Gruyter, 1995.

\bibitem{Wenger}
S.~Wenger.
\newblock Nilpotent groups without exactly polynomial {D}ehn function.
\newblock {\em J. Topology}, 4:141--160, 2011.

\bibitem{Young4}
R.~Young.
\newblock Filling inequalities for nilpotent groups.
\newblock
  \href{http://front.math.ucdavis.edu/0608.5174}{\texttt{arXiv:math/0608174}}.

\end{thebibliography}

\small{
\ni  \textsc{Martin Kassabov} \rule{0mm}{6mm} \\
Department of Mathematics,
Cornell University, 310 Malott Hall, Ithaca, NY 14850, USA \\ 
\textsc{Current Address}\\
School of Mathematics,
University of Southampton, Highfield, Southampton, SO17 1BJ, UK\\ \texttt{kassabov@math.cornell.edu},
\texttt{martin.kassabov@southampton.ac.uk}
}

\small{
\ni  \textsc{Timothy R.\ Riley} \rule{0mm}{6mm} \\
Department of Mathematics,
Cornell University, 310 Malott Hall, Ithaca, NY 14850, USA \\ \texttt{tim.riley@math.cornell.edu}, \
\href{http://www.math.cornell.edu/~riley/}{http://www.math.cornell.edu/$\sim$riley/}
}

\end{document}